\setlist{noitemsep} 
\newtheorem{corollary}{Corollary}[section]
\newtheorem{proposition}{Proposition}[section]
\newtheorem{lemma}{Lemma}[section]
\newtheorem{fact}{Fact}[section]
\theoremstyle{definition}
\newtheorem{definition}{Definition}[section]
\newtheorem{example}{Example}[section]
\newtheorem{remark}{Remark}[section]
\let\c@conjecture=\c@theorem
\let\c@corollary=\c@theorem
\let\c@proposition=\c@theorem
\let\c@lemma=\c@theorem
\let\c@definition=\c@theorem
\let\c@example=\c@theorem
\let\c@remark=\c@theorem
\let\c@equation\c@theorem
\let\c@question\c@theorem
\let\c@fact\c@theorem
\let\c@observation\c@theorem
\def\makeautorefname#1#2{\expandafter\def\csname#1autorefname\endcsname{#2}}
\def\@tocline#1#2#3#4#5#6#7{\relax
  \ifnum #1>\c@tocdepth 
  \else
    \par \addpenalty\@secpenalty\addvspace{#2}%
    \begingroup \hyphenpenalty\@M
    \@ifempty{#4}{%
      \@tempdima\csname r@tocindent\number#1\endcsname\relax
    }{%
      \@tempdima#4\relax
    }%
    \parindent\z@ \leftskip#3\relax \advance\leftskip\@tempdima\relax
    \rightskip\@pnumwidth plus4em \parfillskip-\@pnumwidth
    #5\leavevmode\hskip-\@tempdima
      \ifcase #1
       \or\or \hskip 1em \or \hskip 2em \else \hskip 3em \fi%
      #6\nobreak\relax
    \dotfill\hbox to\@pnumwidth{\@tocpagenum{#7}}\par
    \nobreak
    \endgroup
  \fi}
\title{Bridge indices of spatial graphs and diagram colorings}
\author{Sarah Blackwell, Puttipong Pongtanapaisan, and Hanh Vo}
\date{}
\begin{document}
\maketitle
\begin{abstract}
We extend the Wirtinger number of links, an invariant originally defined by Blair, Kjuchukova, Velazquez, and Villanueva in terms of extending initial colorings of some strands of a diagram to the entire diagram, to spatial graphs. We prove that the Wirtinger number equals the bridge index of spatial graphs, and we implement an algorithm in Python which gives a more efficient way to estimate upper bounds of bridge indices. Combined with lower bounds from diagram colorings by elements from certain algebraic structures and clasping techniques, we obtain exact bridge indices for a large family of almost unknotted spatial graphs. We also show that for every possible negative Euler characteristic, there exist almost unknotted graphs of arbitrarily large bridge index. 
\end{abstract}


\section{Introduction}

The \textit{bridge index} has been a useful complexity measure of knots and links since the 1950s, when Schubert used it to show that a knot has only finitely many companions \cite{schubert1954numerische}. The bridge index has also been used to estimate several geometric quantities such as distortion \cite{pardon2011distortion,blair2020distortion}, lattice stick number \cite{adams2012stick}, and total curvature \cite{milnor1950total}. It is conjectured that the bridge index equals an algebraic quantity called the \textit{meridional rank}; a counterexample has not yet been discovered (see Problem 1.11 in Kirby's list \cite{kirby1997problems}). Baader, Blair, and Kjuchukova  verified that this conjecture holds for a large family of links by rephrasing the bridge index algorithmically \cite{baader2021coxeter}, building off of work by Blair, Kjuchukova, Velazquez, and Villanueva \cite{blair2020wirtinger}.

Knots and links can be considered as a proper subclass of \textit{spatial graphs}, which encompass embeddings of 1-complexes. These graphs embedded in 3-space are objects of interest to chemists, as atoms of a molecule and their chemical bonds can be modeled as vertices and edges of a spatial graph, respectively \cite{castle2008ravels}. Additionally, conclusions about abstract graphs can be made by considering how they are spatially embedded in space. For example, the complete graphs $K_6$ and $K_7$ fall into two distinct levels in a filtration of abstract graphs formed by considering the minimum number of links in any embedding of the graph: any embedding of $K_6$ (resp. $K_7$) in 3-space contains at least one link (resp. 21 links) \cite{fleming2009counting}.

Various authors have generalized the bridge index to spatial graphs, and some have shown that their version still approximates curvature as the bridge index of knots does \cite{soma2002curvature,gulliver2012total}. There are some properties however that do not carry over to the more general setting. For instance, any knot with bridge index exactly two is prime \cite{schubert1956knoten}, but this is no longer true for $\Theta$-graphs \cite{motohashi20002}. For non-prime spatial graphs, there exist estimates for the bridge indices in terms of the number of summands \cite{taylor2021tunnel}.

Since a random spatial graph encountered in nature usually does not have a simple diagram, it is beneficial to generalize the method of \cite{blair2020wirtinger} to systematically compute the bridge indices of spatial graphs from inputs that are computer-friendly. Our procedure can be summarized as follows. Given a spatial graph diagram, we pick a subset of ``strands'' to color and define a move that extends a color to an additional strand provided some conditions are met at a crossing. Roughly, the \textit{Wirtinger number} is (a weighted count of) the minimum number of initial colorings that extend to the entire diagram by the move, minimized over all diagrams (see \autoref{sec:wirtdef} for a formal definition). This quantity turns out to equal the bridge index, which we prove in \autoref{sec:result}. 

\begin{restatable}{theorem}{main}
\label{thm:main}%
    For spatial graphs, the Wirtinger number equals the bridge index.
\end{restatable}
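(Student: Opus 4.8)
The plan is to prove the equality by establishing the two inequalities $\omega(G)\le b(G)$ and $b(G)\le \omega(G)$ separately, where $\omega(G)$ denotes the Wirtinger number and $b(G)$ the bridge index, adapting the argument of Blair et al.\ \cite{blair2020wirtinger} for links to the graph setting. The genuinely new content is the bookkeeping at vertices of degree at least three, since everything away from such vertices is formally the same as in the link case.

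For $\omega(G)\le b(G)$ I would start from a diagram $D$ coming from an embedding that realizes the bridge index, drawn with respect to a height function so that every maximum — an interior maximum of an edge, or a vertex all of whose incident edge-germs point downward — sits near the top and so that each overstrand at a crossing descends from such a maximum. I take as the seed set the strands at the very top: the overarc issuing from each edge-maximum, together with the strands emanating from each maximal vertex as prescribed in \autoref{sec:wirtdef}. Since every overstrand is then colored at the outset, at each crossing only the understrand must be propagated, and I would verify that the move floods $D$ by inducting downward along each understrand arc from the colored strand at its top. Because the number of seeds chosen this way is exactly the number of maxima, this gives $\omega(D)\le b(G)$ and hence $\omega(G)\le b(G)$. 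The point needing care is the count at a maximal vertex of degree $d$: one must seed precisely the strands charged to that vertex and check that this matches the contribution of the vertex to the bridge index.

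The reverse inequality $b(G)\le \omega(G)$ is the technical heart, and I would prove it by construction. Starting from a diagram $D$ and a flooding seed set $S$ with $|S|=\omega(G)$, I order the strands $s_1,\dots,s_m$ so that $s_1,\dots,s_{|S|}$ are the seeds and each later $s_i$ is colored at a crossing whose overstrand and one understrand piece already occur among $s_1,\dots,s_{i-1}$. I then build an embedding in bridge position by induction on this order: realize the seeds as $|S|$ disjoint maxima at the top of a trivial tangle, and process the moves in order, each time isotoping the newly colored strand $s_i$ so that it dives underneath the already-placed arcs at its defining crossing without creating a new maximum. Since every non-seed strand is absorbed in this way, the resulting position has exactly $|S|$ maxima, whence $b(G)\le |S|=\omega(G)$.

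The main obstacle, in both directions but most acutely in the construction above, is the behavior at vertices of degree at least three. I expect the crux to be showing that the combinatorial rule of \autoref{sec:wirtdef} — which spreads a color among the strands meeting a vertex once enough of them are colored — is realized by an honest isotopy of the spatial graph that places the vertex \emph{without} producing an extra maximum, and that a maximal vertex of degree $d$ contributes to the bridge count exactly what the coloring rule charges for it. This requires checking that the pushing-under isotopies near a vertex respect the graph structure, in particular that they do not illegitimately separate edges sharing that vertex, and that the number of maxima produced equals $|S|$ on the nose. Once the vertex case is settled, the remaining crossing manipulations are exactly as in the link case \cite{blair2020wirtinger}.
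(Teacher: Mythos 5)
Your overall strategy coincides with the paper's: the easy direction is the paper's \autoref{prop:wirtatmostbr} (seed the maxima and upper pods of a bridge position and flood down the resulting braid), and for the hard direction both you and the paper build an embedding whose heights follow the coloring order, with seeds lifted to maxima and upper pods. But your induction for $\beta(G)\le\omega(G)$ has a genuine gap: processing only the coloring moves, ``each time isotoping the newly colored strand $s_i$ so that it dives underneath the already-placed arcs at its defining crossing,'' connects the lifted strands only at crossings where a move was actually performed. Every crossing of the diagram cuts its understrand into two strands that must be physically rejoined beneath the overstrand, including the many crossings at which no move was used: multi-colored crossings, and same-colored crossings where both understrands received their color from elsewhere. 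At the end of your induction the lifts form a disjoint union of arcs, not an embedding of $G$, and the remaining joins are exactly where the difficulty lives.

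At such a no-move crossing, if the overstrand was colored after both understrands, the joining arc cannot be monotone --- it must dip below the overstrand's level, so a local minimum is forced. Minima are harmless for the count of upper components, but their insertion must be consistent: along a closed single-seeded component (or a self-loop based at a pod) the heights decrease away from the seed in both directions and can meet at only one bottom point, so one must prove there is \emph{exactly one} crossing of this kind per unknotted component or per self-loop. This is precisely the paper's \autoref{lem:importantlemma} (resting on \autoref{lem:connectivity}), and it is the heart of the argument; nothing in your outline supplies it. Relatedly, you locate the crux at the vertices, but the paper disposes of vertices cheaply: seed pods are perturbed to upper pods, and each non-seed pod becomes a lower pod by placing the vertex at the free end of its lowest strand and dropping monotone arcs down to it, at the cost of lower components only. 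The genuinely new graph phenomenon is not vertex placement but case (3) of \autoref{lem:importantlemma}: self-loops at a pod create same-colored, late-overstrand crossings with no analogue in the link case, so deferring ``the remaining crossing manipulations'' to \cite{blair2020wirtinger} does not cover them.
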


We use our main result, which we implement as an algorithm in Python, to provide upper estimates for the bridge index of a variety of spatial graphs. In some cases we combine our techniques with algebraic lower bounds, which we detail in \autoref{sec:bounds}, to compute exact values. We use our methods to prove in \autoref{sec:large} that the bridge index of almost unknotted spatial graphs can be arbitrarily large. We accomplish this by modifying clasping and summing operations to be compatible with quandle colorings. Note that a connected graph $G$ with Euler characteristic $\chi(G)=\beta_0(G)-\beta_1(G)\geq 0$ is either a link or is not topologically interesting.

\begin{restatable}{theorem}{index}
\label{thm:index}%
   For every possible negative integer $m$, there exist almost
unknotted graphs $G$ of arbitrarily large bridge index with Euler characteristic $\chi(G)=m$.
\end{restatable}

Throughout \autoref{sec:examples} we provide many other example computations. In particular, spatial graph whose vertices all have degree four can be obtained from link diagrams by turning some crossings into vertices, or fusing maxima and minima. We can compute exact bridge indices for spatial graphs obtained from generalized Montesinos links by fusing some maxima and minima together (see \autoref{ex:Montesinos}). In some cases, these fusing operations can still produce interesting almost unknotted graphs. 

Our Python code can be found at 
\href{https://github.com/hanhv/graph-wirt}{https://github.com/hanhv/graph-wirt}.

\subsection*{Acknowledgments} Some of this work was conducted when the first author visited the other two at Arizona State University, and she thanks the mathematics department for their hospitality. We would like to thank Julien Paupert for support and helpful advice. We also thank the reviewer for many helpful suggestions that improved the paper. SB was supported by the NSF Postdoctoral Research Fellowship DMS-2303143.

\setcounter{tocdepth}{2}
\tableofcontents

\section{Preliminaries} \label{sec:prelim}
\subsection{Spatial graphs} \label{sec:graphs}
A \textit{graph} $G$ is a pair $(V,E)$ of a finite, non-empty set $V$ of vertices and a finite multiset $E$ of edges, where $G = E \subset V \times V$. A \textit{spatial graph} is an embedding of a graph $G$ in $S^3$ taking vertices to points in $S^3$ and taking edges $(u,v)$ to an arc whose endpoints are images of $u$ and $v$. We assume that our spatial graphs are ``tame'' in the sense of \cite{friedl2021graphical}. We say a spatial graph is \textit{trivial} if it can be isotoped to lie in the plane. 

A vertex $v$ has \textit{degree} $k$, or is \textit{$k$-valent}, if it is connected to $k$ edges. In this paper, we do not consider embeddings of graphs with degree one vertices. Topologically, we can isotope the edge adjacent to a degree one vertex freely so that it does not contribute to the knotting. We treat degree two vertices as parts of edges; that is, we consider the two edges connected to the vertex to be the same as if we combined the two edges into one edge without a vertex there.

Two of the most simple types of spatial graphs, which we will encounter throughout the paper, are \textit{$\Theta$-graphs} and \textit{handcuff graphs}. A spatial $\Theta$-graph has two vertices $u$ and $v$, with the underlying graph type $\{(u,v),(u,v),(u,v)\}$. A spatial handcuff graph also has two vertices, but with the underlying graph type $\{(u,u),(u,v),(v,v)\}$. We can generalize the notion of $\Theta$-graphs to \textit{$\Theta_n$-graphs}, which have $n$ edges between the two vertices (for $n\geq 4$).

A straightforward way to construct a spatial graph is to start with a link, and then add some edges. This way, the complexity of the knotted graph is obtained from the original link. This motivates the study of the following class of spatial graphs. A nontrivial spatial graph is \textit{almost unknotted}\footnote{Sometimes this is also referred to as \textit{minimally knotted} or \textit{Brunnian} in the literature.} if all of its proper subgraphs are trivial. For $\Theta$-graphs, this means each \textit{constituent knot}, that is, the knot obtained by deleting one of the three edges, is the unknot. 

A common way to construct more complicated spatial graphs is to form composite spatial graphs, analogous to forming composite numbers from prime numbers. Let $G_1,G_2$ be spatial graphs in $S^3.$ Consider a point $p_1\in G_1$ and another point $p_2\in G_2$. Removing small regular neighborhoods of $p_1,p_2$ create spherical boundaries that can be identified together. The resulting space is another spatial graph in $S^3$. If $p_1$ and $p_2$ both lie in the interior of edges, then the resulting graph is a \textit{connected sum} and we denote it by $G_1\# G_2$. When $p_1$ and $p_2$ are both vertices of degree $k$, then the resulting graph is a \textit{$k$-valent vertex sum} and we denote it by $G_1\#_k G_2$.

\subsection{Bridge splittings}
There are several definitions for the bridge index of spatial graphs, but the following formulation, which also appeared in the work of Ozawa \cite{ozawa2012bridge}, is compatible with widely used concepts such as thin positions and tunnel systems. We call a properly embedded graph $\tau$ in a 3-ball $B$ a \textit{trivial tangle} if components of $\tau$ are trees or arcs and there exists a disk $D$ in $B$ containing $\tau$; see \autoref{fig:trees}.

\begin{figure}[ht!]
\includegraphics[width=10cm]{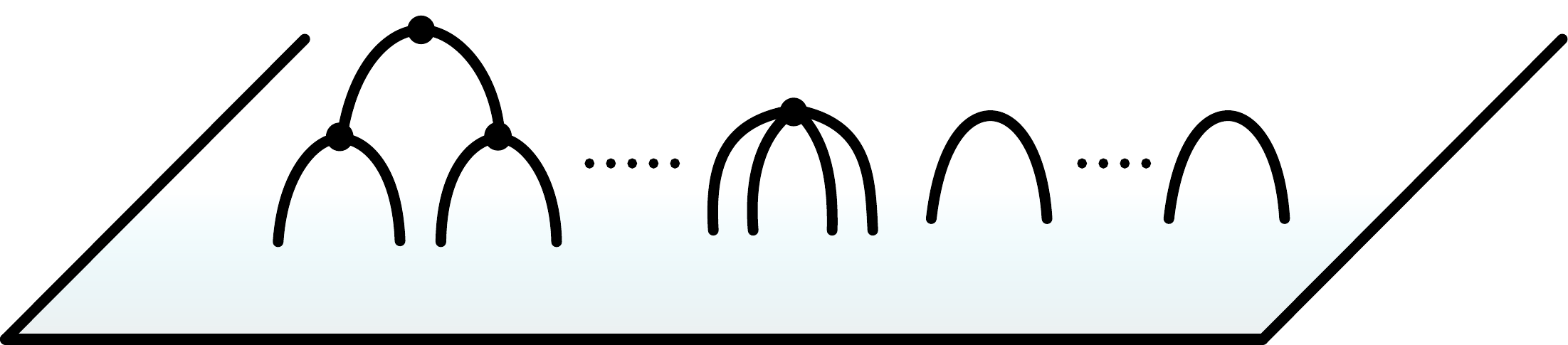}
\centering
\caption{A trivial tangle, which contains some number of trees and arcs (not necessarily grouped together as shown). We did not include vertices where the tangle intersects $\partial B$, since when we glue two trivial tangles together along these vertices, we think of the resulting closed spatial graph as no longer having vertices where we glued. \label{fig:trees}}
\end{figure}

A \textit{bridge splitting} of a spatial graph $G$ in the $3$-sphere $S^3$ is a decomposition into two trivial tangles $(S^3,G) = (B_1,\tau_1)\cup_{S^2} (B_2,\tau_2)$. The splitting sphere $S^2$ is also called the \textit{bridge sphere}.
For bridge splittings of links in $S^3$, each tangle has the same number of maxima with respect to the radial height function on the ball $B_i$. We call the minimum number of maxima over all possible bridge splittings the \textit{bridge index}. Note however that in general the two tangles in a bridge splitting of a spatial graph may have a different number of maxima with respect to the radial height function on the ball $B_i$, where the graph $\tau_i$
is isotoped so that each component contains a single maximum. For this reason there exist multiple definitions of bridge index for spatial graphs in the literature. Our definition will be the following.

\begin{definition} \label{def:bridge}
    The \textit{bridge index of a particular bridge splitting $(S^3,G) = (B_1,\tau_1)\cup_{S^2} (B_2,\tau_2)$ of a spatial graph $G$}, denoted $\beta(S^3,G)$, is defined to be $\tfrac{1}{2}|S^2\cap G|$. The \textit{bridge index of a spatial graph $G$}, denoted $\beta(G)$, is the minimum bridge index over all bridge splittings for $G$. 
\end{definition}

\noindent When it is clear from context, we may abuse notation and use $\beta(G)$ to denote the bridge index of a particular bridge splitting of a graph as well.

Another method for defining the bridge index for spatial graphs which has appeared previously in the literature is to count the number of components in a trivial tangle (and then minimize this quantity over all possible bridge splittings). Historically this method has been used in the context of studying $\Theta_n$-graphs \cite{goda1997bridge,motohashi20002,tsuno2003bridge}, with the constraint that each tangle contains only one vertex of $G$ so that the number of components of the two tangles is the same. In general, this may not always be the case, but one possible fix is to take the minimum number of components between the two tangles. 

Once the number of tangle components is known, along with the values of the weights of each component (see \autoref{def:strand}), we can recover the number of intersection points between the spatial graph embedding and the bridge sphere, which is the sum of the weights of the components of a trivial tangle. That is, if  $\min(|\tau_1|,|\tau_2|) = i$, then we have that
\begin{align*}
       \beta(S^3,G) = \tfrac{1}{2} |S^2\cap G| = \tfrac{1}{2} \sum_i w_i,
\end{align*}
where the $w_i$ are the weights of the $i$ components.

Furthermore, once the number of components of one of the trivial tangles $|\tau_1|$ is known, one can calculate the number of components of the other trivial tangle $|\tau_2|$ in a bridge splitting as follows. Let $\chi(G)$ be the number of vertices of $G$ minus the number of edges of $G$. To keep track of the local maxima, we put a vertex of degree two at each local maximum which is a bridge arc. At the moment, the number of vertices is $|\tau_1|+|\tau_2|.$ The number of edges can then be determined by examining the sum of the degrees of the vertices in $\tau_1$: $\sum_{1\leq i\leq |\tau_1|} d_i$. It follows that 
   \begin{align*}
       \chi(G)=|\tau_1|+|\tau_2|-\sum_{1\leq i\leq |\tau_1|} d_i.
   \end{align*}

\subsection{Wirtinger number} \label{sec:wirtdef}
Given a spatial graph $G$ in $S^3$, a \textit{diagram} $D$ for $G$ is a generic immersion of $G$ into the plane with each crossing labeled as an overcrossing or an undercrossing. 
An \textit{arc} of a spatial graph diagram is a subset of the diagram which starts at an undercrossing or vertex and traces the diagram until it meets another undercrossing or vertex, at which point it ends. 

For the following definition, it is useful to recast a diagram as a (possibly) disconnected planar graph, where degree one vertices are placed on either side of an undercrossing and the piece of the diagram in between is deleted, as in \autoref{fig:planar}. For the sake of clarity, we will refer to these degree one vertices exclusively as \textit{endpoints} (or \textit{free ends}); that is, ``vertex'' will always imply degree three or more, as assumed in \autoref{sec:graphs}. In other words, vertices of the diagram correspond to vertices of the spatial graph, while endpoints are artifacts of the diagram which we have added.

\begin{figure}[ht!]
\includegraphics[width=2.75cm]{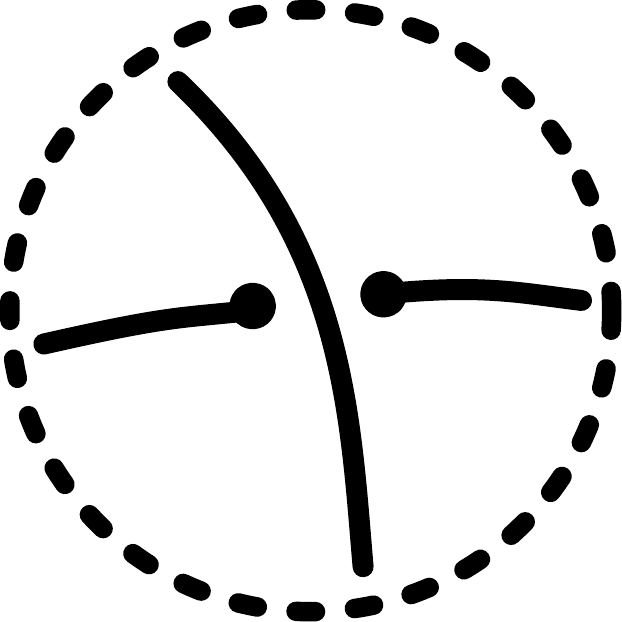}
\centering
\caption{When we recast a spatial graph diagram as a planar graph, we delete small neighborhoods of the undercrossings and add endpoints to each side of the deleted portion. \label{fig:planar}}
\end{figure}

\begin{definition} \label{def:strand}
A \textit{strand} of a diagram for a spatial graph is a connected component of the diagram, when thought of as a planar graph as described above.
The \textit{weight} of a strand is the number of endpoints of the strand. 
\end{definition}

\noindent Arcs which both begin and end at an undercrossing, such as the green arc in \autoref{fig:strands}, are strands with weight $2$. Arcs which begin or end at vertices, such as the red arc in \autoref{fig:strands}, are part of a larger strand, but not strands themselves. Strands with only one vertex, such as the blue strand in \autoref{fig:strands}, have weight equal to the degree of the vertex.

\begin{figure}[ht!]
\includegraphics[width=7cm]{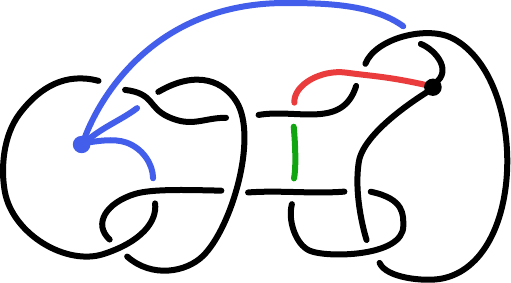}
\centering
\caption{Some examples (blue and green) and non-examples (red) of strands. The blue strand has weight $3$ and the green strand has weight $2$. \label{fig:strands}}
\end{figure}

We now describe a process for coloring a spatial graph diagram, analogous to the procedure detailed in \cite{blair2020wirtinger} for links. First choose $k$ distinct colors, and color a subset of strands of $D$ using these $k$ colors, which we call \textit{seeds}.
Sometimes we will also refer to the strands which are colored the initial colors themselves as seeds.
Then continue to color any uncolored arcs using the following \textit{coloring move}, which extends the coloring of a colored arc which passes under another colored arc to the other arc passing under the overstrand. See \autoref{fig:moves1}. In particular, once a vertex is reached, the coloring does not extend across the vertex; the other arcs connected to the same vertex must receive their color from a coloring move elsewhere in the diagram. 

Note that the coloring move is valid both when the overstrand matches and does not match the color of the understrand; all that matters is that the overstrand has been colored. Also, while strands which are included as seeds will contain multiple arcs all of the same color (unless the strand itself is just one arc), in general arcs connected to the same vertex can end up being colored with different colors if not included as a seed. See \autoref{fig:sequence1} for an example of this phenomenon.

\begin{figure}[ht!]
\includegraphics[width=8cm]{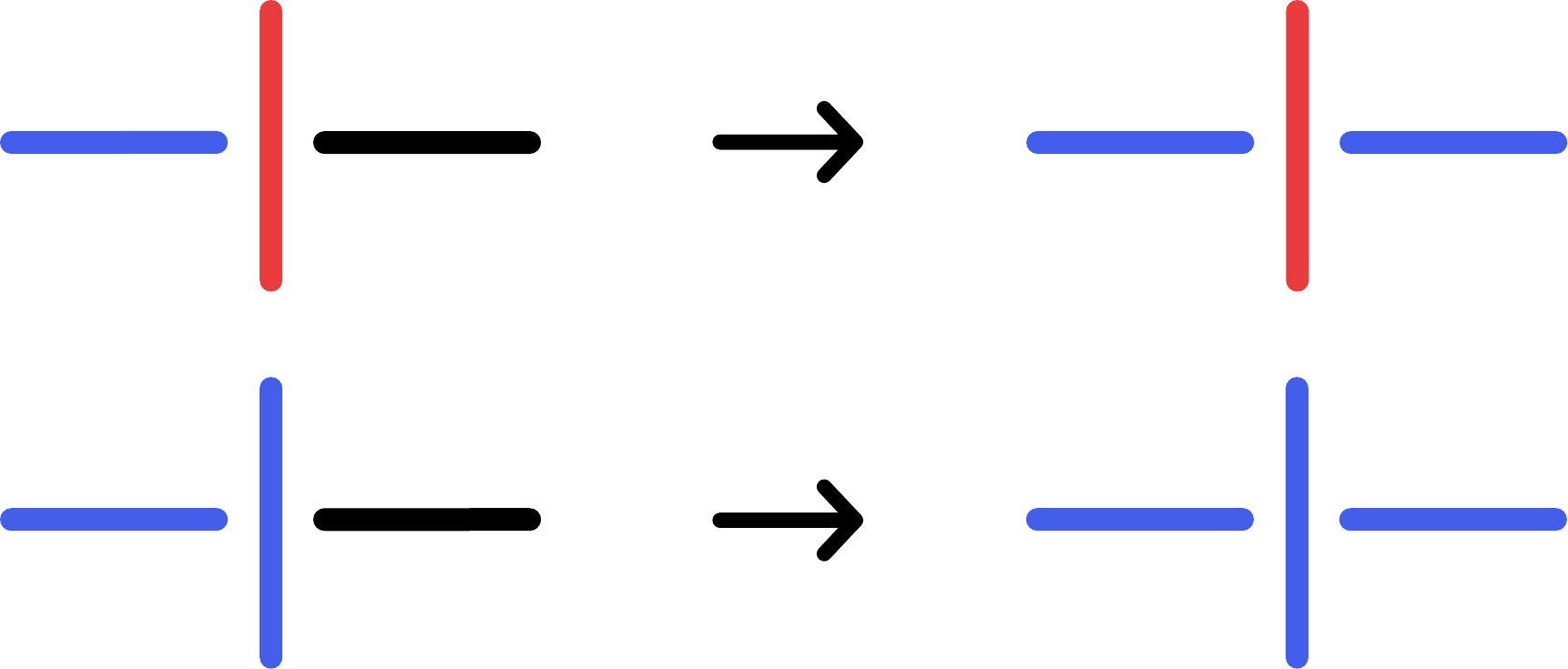}
\centering
\caption{The coloring move: if the overstrand is colored and one of the understrands at the crossing is colored (with possibly the same color), then the move extends the coloring to the other understrand. \label{fig:moves1}}
\end{figure}

We say that a spatial graph diagram is \textit{completely colored} if all free ends of every strand in the diagram have received a color (in particular, once all free ends of a planar tree in a diagram have been colored, we consider the entire tree to be colored).
Given a spatial graph diagram, suppose it can be completely colored by starting with seeds $\{s_1,\ldots, s_n\}$ and extending these colors using the coloring move. 
If $w_i$ is the weight of the seed $s_i$, and  $\tfrac{1}{2}\sum_{i=1}^n w_i =\vcentcolon k$, then we call the diagram \textit{$k$-Wirtinger colorable}. Note that we must generalize the definition in \cite{blair2020wirtinger} to incorporate weights into our definition in order to match the definition we have chosen for bridge index (see \autoref{def:bridge} and the following discussion).

\begin{example}
    See \autoref{fig:sequence1} for an example of a $\tfrac{7}{2}$-Wirtinger colorable diagram for a spatial $\Theta$-graph using three seeds. It is an exercise to check that this diagram cannot be completely colored with only two seeds, and (a slightly harder exercise to see that) furthermore, it cannot be completely colored with three seeds each of weight $2$. Thus $\tfrac{7}{2}$ is actually the smallest value of $k$ for which this diagram is $k$-Wirtinger colorable. 

\begin{figure}[p]
\includegraphics[width=13cm]{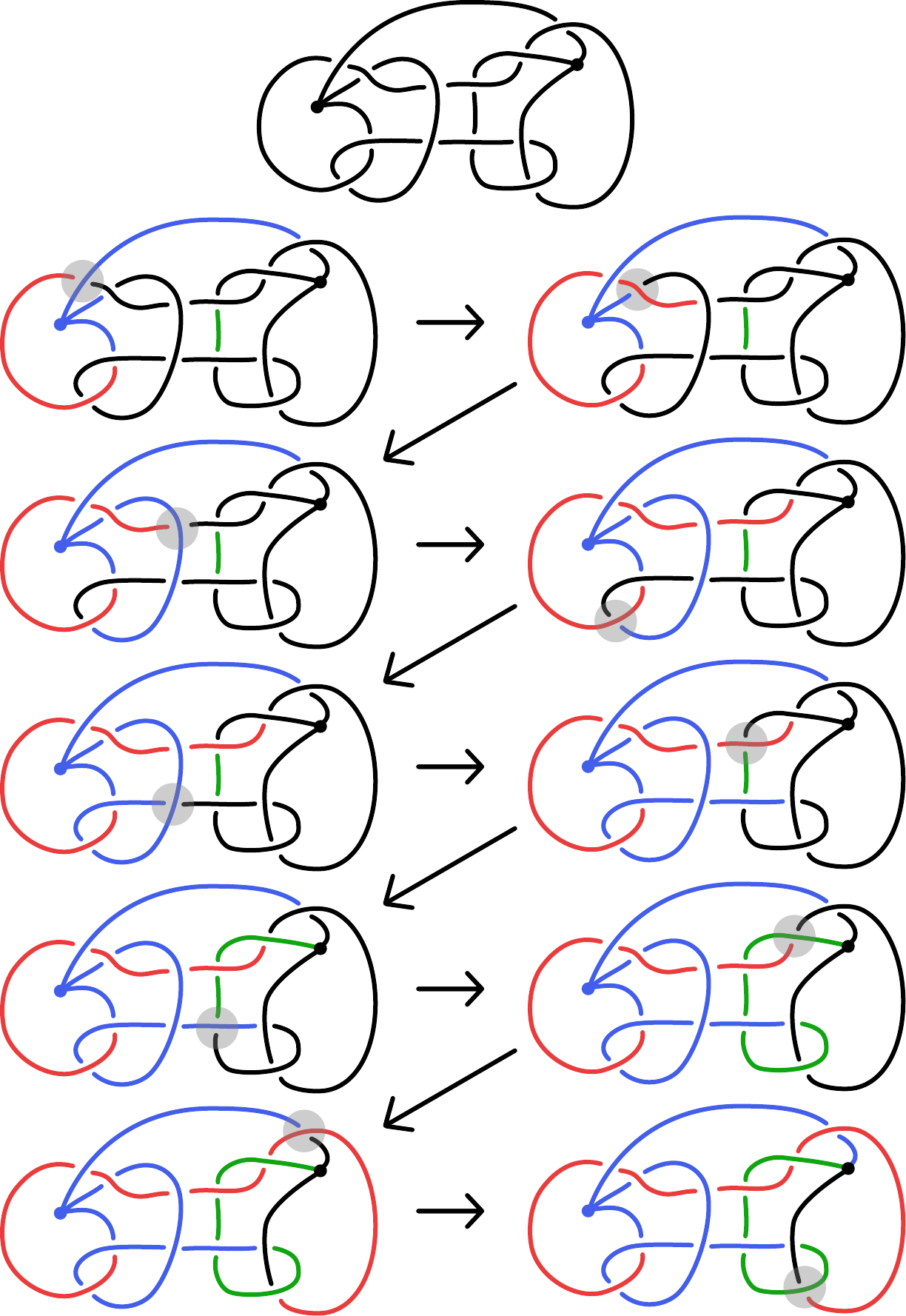}
\centering
\caption{This diagram is $\tfrac{7}{2}$-Wirtinger colorable, using three seeds with weights $2$ (red), $3$ (blue), and $2$ (green). \label{fig:sequence1}}
\end{figure}
\end{example}

\begin{example}
    See \autoref{fig:sequence2} for an example of a $3$-Wirtinger colorable diagram for a spatial $\Theta$-graph using two seeds. It is again an exercise to show that this is the smallest $k$ for which this diagram is $k$-Wirtinger colorable. 
    In fact, our Python code checks for all possible combinations of strands given a fixed number of colors.

\begin{figure}[p]
\includegraphics[width=13cm]{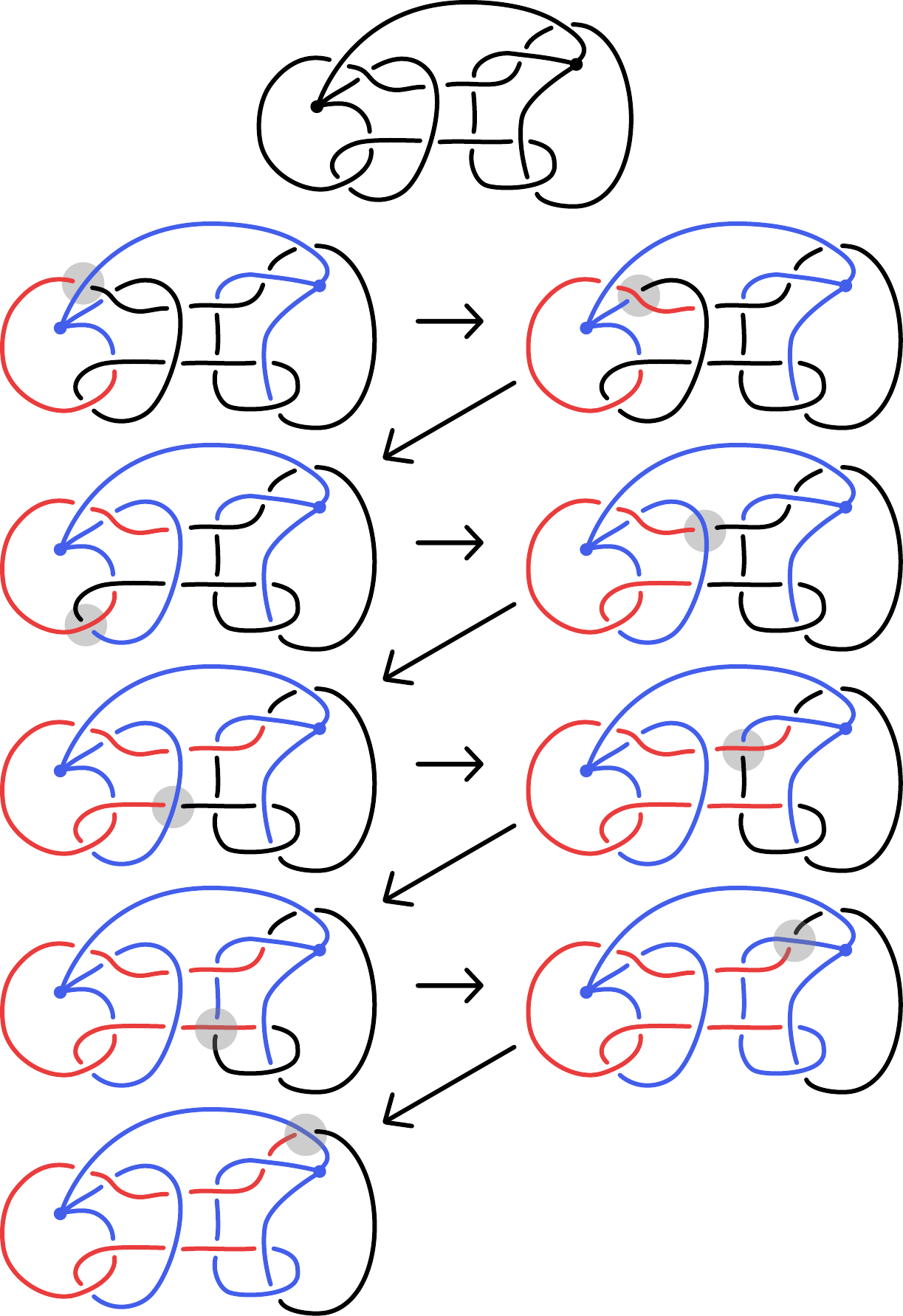}
\centering
\caption{This diagram is $3$-Wirtinger colorable, using two seeds with weights $2$ (red) and $4$ (blue). \label{fig:sequence2}}
\end{figure}
\end{example}

We are now ready to define the Wirtinger number of a spatial graph. 

\begin{definition}    
    The \textit{Wirtinger number of a spatial graph diagram $D$}, denoted $\omega(D)$, is the minimal value of $k$ for which the diagram is $k$-Wirtinger colorable. 
    The \textit{Wirtinger number of a spatial graph $G$}, denoted $\omega(G)$, is the minimal Wirtinger number over all diagrams for $G$. 
\end{definition}

\section{Main result} \label{sec:result}

This section is dedicated to the proof of \autoref{thm:main}. We start with two relevant definitions.

\begin{definition}
    A spatial graph diagram $D$ \textit{realizes} the bridge index for the spatial graph $G$ it represents if a circle can be drawn in the plane in which the diagram sits that splits the diagram into diagrams for two trivial tangles, the tangle diagrams intersect the circle in $b$ points, and the bridge index of $G$ is $\tfrac{1}{2}b$.
    
    Similarly, a spatial graph diagram $D$ \textit{realizes} the Wirtinger number for the spatial graph $G$ it represents if there exist seeds which completely color $D$ and whose weights sum to $2k$, and the Wirtinger number of $G$ is $k$.
\end{definition}

One direction of our main result is more immediate; we consider this direction in the following proposition. We phrase our proofs in this section in terms of spatial graphs in $\mathbb{R}^3$ instead of $S^3=\mathbb{R}^3\cup \{\infty\},$ but this does not make the result less general since the isotopy classes of spatial graphs in $S^3$ and $\mathbb{R}^3$ are the same: the point at infinity $\{\infty\}$ can be taken to be disjoint from a spatial graph and its isotopies. 

\begin{proposition}
     For spatial graphs, the Wirtinger number is at most the bridge index.\label{prop:wirtatmostbr}
\end{proposition}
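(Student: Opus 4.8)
The plan is to prove the inequality $\omega(G) \leq \beta(G)$ by taking a minimal bridge splitting and extracting from it an explicit diagram together with a valid Wirtinger coloring that uses exactly $\beta(G)$ seeds. Start with a bridge splitting $(S^3,G) = (B_1,\tau_1)\cup(B_2,\tau_2)$ realizing the bridge index, so that $|\tau_1| = \beta(G)$. Using the \autoref{observation} from the Braids subsection, arrange the splitting in standard position: all upper pods and local maxima at one height, all lower pods and local minima at a lower height, with the strands in between forming a braid. Projecting this to the plane yields a spatial graph diagram $D$ for $G$, and I would first check that the overstrand/understrand data of $D$ can be recorded consistently with the height function $h$ coming from the braid.

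Next I would build the coloring. The idea is that each component of the upper tangle $\tau_1$ should supply exactly one seed: a bridge arc contributes one seed arc, and an upper pod of degree $n$ contributes one seed colored on all $n$ of its outgoing strands (which is exactly the type of repeated coloring permitted for pod seeds in the definition). This gives precisely $|\tau_1| = \beta(G)$ seeds. I would then argue that the coloring move can propagate these colors to the entire truncated diagram. The natural mechanism is to push colors downward in height: reading the braid from top to bottom, each time a strand passes under an already-colored overstrand at a crossing, the coloring move extends the color across that crossing. Because the strands between the two height levels form a braid, every undercrossing is encountered in a top-to-bottom order compatible with $h$, so by an induction on height every strand eventually receives a color. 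I would invoke \autoref{lem:connectivity} to confirm that throughout this process the colored region associated to each seed remains a single arc (for a bridge-arc seed) or the correct bundle of arcs converging to a pod (for a pod seed), which is what guarantees the coloring stays consistent and no strand is forced to receive two conflicting colors.

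The main obstacle I expect is handling the strands near the \emph{bottom} of the braid, specifically the local minima and the lower pods, where the downward-propagation argument threatens to stall: at a local minimum or at a crossing where the overstrand sits below both understrands, there may be a strand whose color is not determined by anything above it. This is exactly the configuration analyzed in \autoref{lem:importantlemma}, and I would lean on that lemma to control it. The lemma shows that whenever two understrands at a crossing $c$ end up the same color with the overstrand lowest (the situation $h(s_r)\leq\min\{h(s_q),h(s_p)\}$), that color corresponds either to an unknotted component or to a self-loop attached to a pod, and that such a crossing is \emph{unique} for its color. The payoff is that each such "bottom" crossing corresponds to exactly one place where a local minimum must be inserted, and the count of these crossings together with the non-seed lower pods matches $|\tau_2|$ via the Euler-characteristic bookkeeping in the earlier remark. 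So the delicate part is verifying that the coloring completes correctly precisely at these minimal-height crossings, and that no additional seeds beyond the $|\tau_1|$ coming from the upper tangle are ever needed.

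Concretely, I would organize the write-up as: (i) put the minimal bridge splitting in braid position and read off the diagram $D$ and height function $h$; (ii) designate the $|\tau_1|$ seeds, one per upper tangle component; (iii) prove by downward induction on height, using the braid structure and \autoref{lem:connectivity}, that the coloring move colors every strand; (iv) use \autoref{lem:importantlemma} to confirm the only places where propagation could fail are the unique lowest crossings associated to unknotted components and pod self-loops, each of which is already accounted for by the geometry of $\tau_2$ rather than requiring a new seed. This shows $D$ is $|\tau_1|$-Wirtinger colorable, hence $\omega(G)\leq\omega(D)\leq|\tau_1|=\beta(G)$, completing the proof of \autoref{prop:wirtatmostbr}.
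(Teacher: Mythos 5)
Your steps (i)--(iii) are exactly the paper's proof: put a minimal bridge splitting in braid position with the $|\tau_1|$ upper pods and maxima at one level $y=a$ and the lower pods and minima at a lower level $y=b$, declare the upper pods and maxima to be the seeds, and propagate colors downward block by block through the braid --- in each block containing a single crossing, the only strand not attached to the top of the block is the outgoing understrand, and one coloring move colors it, since the overstrand and incoming understrand are attached through the blocks above to already-colored strands. This already colors every strand, with the lower pods colored last, giving $\omega(G)\le \omega(D)\le |\tau_1|=\beta(G)$.

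Step (iv), however, addresses a problem that does not exist, and the tool you invoke for it does not apply. In braid position every crossing lies strictly between the two levels and all strands there are monotone, so there is no crossing whose overstrand sits geometrically below both understrands; the local minima and lower pods are not crossings, so they require no coloring move at all --- the strands meeting them are already colored by the downward induction. Moreover, \autoref{lem:importantlemma} belongs to the \emph{other} direction of \autoref{thm:main}: its height function $h$ is not the geometric height of a bridge position but the stage-based height assigned to strands of an already $k$-Wirtinger-colored diagram when one builds an embedding \emph{from} a coloring, and its conclusion locates where local minima must be inserted in that construction. Invoking it to show that your coloring propagates would be circular, since it presupposes the coloring you are in the middle of constructing. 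Fortunately, because the step it was meant to patch is vacuous, deleting (iv) leaves a complete proof identical to the paper's. (The appeal to \autoref{lem:connectivity} in (iii) is likewise unnecessary: the coloring move only ever extends colors to uncolored strands, so no conflicting assignments can arise.)
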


\begin{proof}
   Let $D$ be a diagram of $G$ that realizes the bridge index. There exists a circle $C$ that intersects $D$ in $\beta(G)$ points that separates $D$ into diagrams of trivial tangles $\tau_1$ and $\tau_2$. 
   We embed $G$ in $\mathbb{R}^3$ such that it projects to $D$ on the $xy$-plane, and adjust the height ($z$-coordinate) of pieces of the graph as follows.   
   After a slight deformation, we can assume that the crossings and vertices lie in different levels. Furthermore, we can arrange the local maxima and vertices to be higher than crossings, and arrange the local minima and vertices to be lower than crossings. We then choose as seeds all of the strands that intersect $C$. The colorings at these seeds extend to the entire diagram because the colorings can be extended one crossing at a time as shown in \autoref{fig:bleed}. Afterwards, all the free ends of every tree will be colored.
\end{proof}

\begin{figure}[ht!]
\includegraphics[width=6cm]{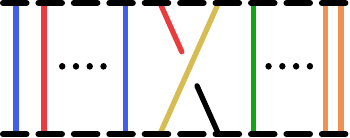}
\centering
\caption{Extending the coloring from all the strands that intersect a projected bridge sphere to all the strands in the trivial tangle $\tau_2$. Extending to $\tau_1$ is similar.
\label{fig:bleed}}
\end{figure}

Now we prove our main result. We define a ``doubling map'' $P \colon \mathscr{G} \rightarrow \mathscr{L}$ from the set $\mathscr{G}$ of diagrams (in $\mathbb{R}^2$) of spatial graphs in $S^3$ to the set $\mathscr{L}$ of diagrams (in $\mathbb{R}^2$) of links in $S^3$, where $P$ doubles each edge, removes a neighborhood of each vertex, and connects up adjacent arcs as shown in \autoref{fig:double}. We will use our doubling map to reduce the case of spatial graphs to the case of links, and then apply the results of \cite{blair2020wirtinger}.

\begin{figure}[ht!]
\includegraphics[width=14cm]{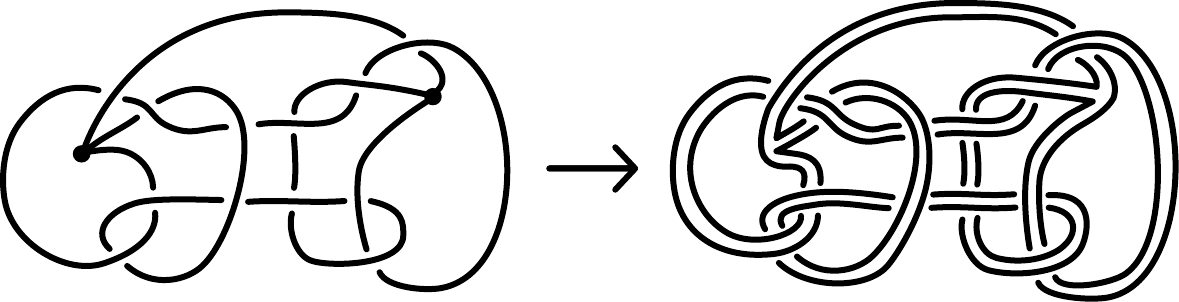}
\centering
\caption{The effect of the doubling map $P$ on a spatial graph diagram.
\label{fig:double}}
\end{figure}

\main*

\begin{proof}
By \autoref{prop:wirtatmostbr} we have that the Wirtinger number is at most the bridge index. 

For the other inequality, suppose we have a spatial graph $G$ with Wirtinger number $k$ which is realized by a diagram $D_G$. Apply $P$ to get $P(D_G) \vcentcolon = D_L$, which is the diagram of a link $L$. Pick seeds $\{s_1,\dots,s_n\}$ in $D_G$ that are sufficient to color the entire diagram in such a way which yields the Wirtinger number $k$. These seeds then induce seeds in $D_L$, but with the rule that different strands in $D_L$ coming from the same seed in $D_G$ must count as different seeds and thus be colored differently, in accordance with the usual procedure for determining seeds for links; see \autoref{fig:double_seeds}. The number of seeds in $D_L$ coming from the $n$ seeds $\{s_1,\dots,s_n\}$ in $D_G$ will be $\sum_{i=1}^m w_i = 2k$, where $w_i$ is the weight of the seed $s_i$ in $D_G$. 

\begin{figure}[ht!]
\includegraphics[width=14cm]{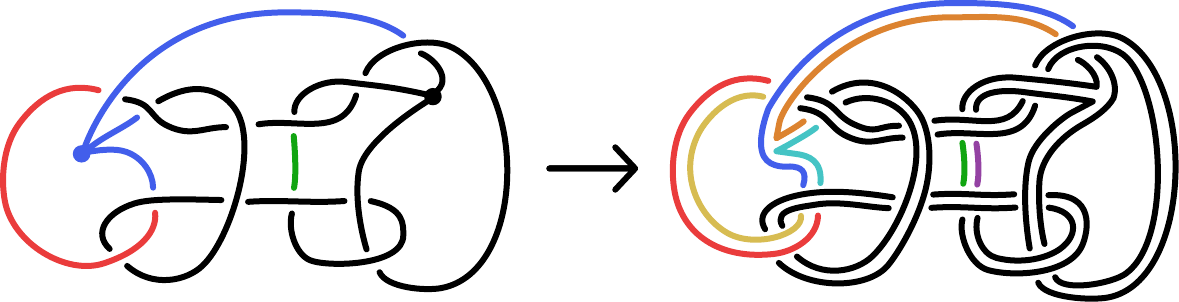}
\centering
\caption{The three seeds in $D_G$ on the left induce seven seeds in $D_L$ on the right.
\label{fig:double_seeds}}
\end{figure}

We then claim that these induced seeds in $D_L$ are sufficient to color the whole diagram. This follows from an analysis of the effect of the map $P$ on the coloring move plus the assumption that the seeds of $D_G$ were sufficient to color all of $D_G$. We define a ``double'' of the original coloring move from \autoref{fig:moves1}, which colors parallel strands one after the next, as shown in \autoref{fig:moves2}. This is a special case of the original coloring move which is made out of repeated applications of the original move in a prescribed way. When coloring our link diagram $D_L$, we will restrict ourselves to only using this double coloring move, so that the coloring will directly correspond to a coloring in the graph diagram $D_G$. Thus we can color all of $D_L$ by finding a sequence of coloring moves in $D_G$ and applying the corresponding doubled coloring moves in $D_L$. Note that once all the free ends of a tree strand in the original diagram are colored, that is sufficient for all strands in the double to be colored; see \autoref{fig:nocross}.

\begin{figure}[ht!]
\includegraphics[width=14cm]{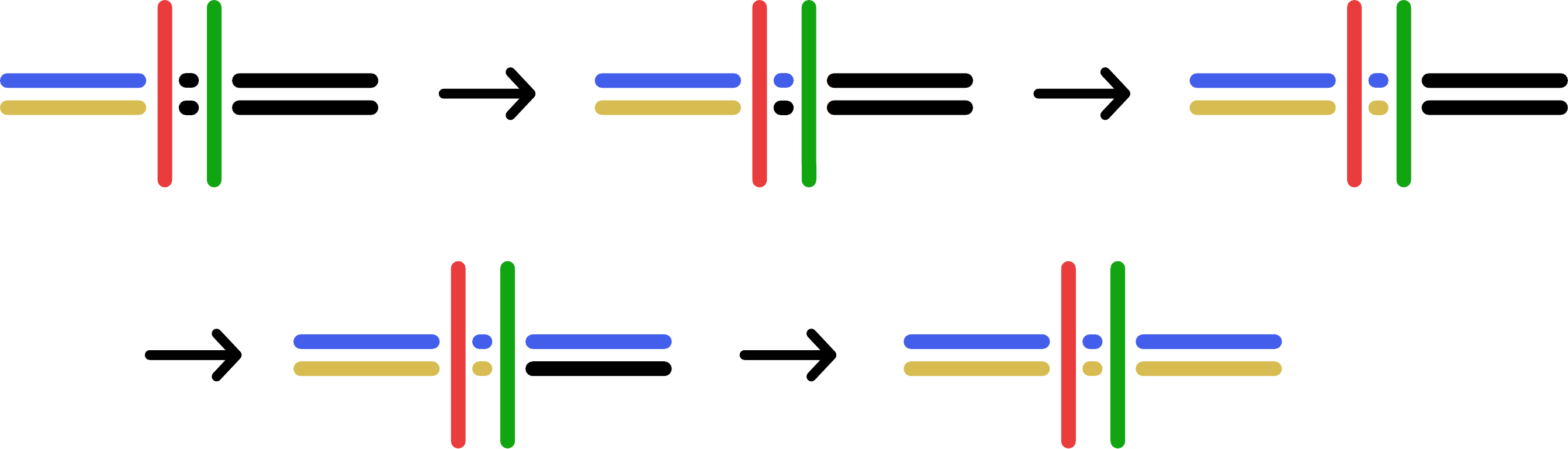}
\centering
\caption{The ``double'' coloring move: if the overstrands are colored (not necessarily the same color) and two of the understrands on the same side of the crossing are colored (not necessarily the same color), then the move extends to the other understrands. 
\label{fig:moves2}}
\end{figure}

\begin{figure}[ht!]
\includegraphics[width=7cm]{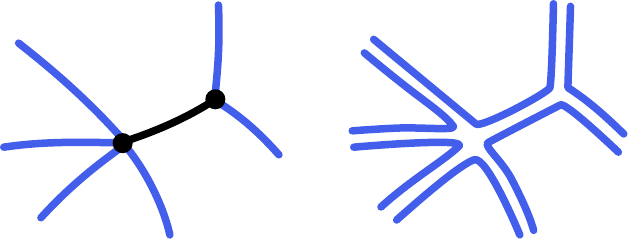}
\centering
\caption{All free ends being colored induces the coloring of every strand in the double. (Note that here we depict all free ends and strands in the double as receiving the same color for the sake of simplicity, but in general this will not be the case.)
\label{fig:nocross}}
\end{figure}

Thus we have a sufficient set of $2k$ seeds for $D_L$, so we may apply the methods of \cite[Theorem~1.3]{blair2020wirtinger} to construct a smooth embedding of $L$ in $\mathbb{R}^3$ with exactly $2k$ local maxima, and conclude that the Wirtinger number of $L$ is bounded below by the bridge index of $L$. 
As before, we embed $G$ in $\mathbb{R}^3$ such that it projects to $D$ on the $xy$-plane, and adjust the height ($z$-coordinate) of pieces of the graph. We embed the seeds at the highest levels, and then embed the remaining strands in the order indicated by the coloring procedure. The result of \cite{blair2020wirtinger} ensures that we do not add new local maxima during the embedding process in addition to the seeds.

However, we also need to embed $L$ in such a way that respects the double structure, so we can ultimately translate back to our original graph. Thus as we embed we require that multiple strands which came from the same seed in $D_G$ must be embedded one after the other (in adjacent sheets), as depicted in \autoref{fig:connectparallel}. Our double coloring move ensures that this will happen as the coloring progresses, so we need only add the requirement that the initial seeds are colored one after the other in this way. 

\begin{figure}[ht!]
\includegraphics[width=12cm]{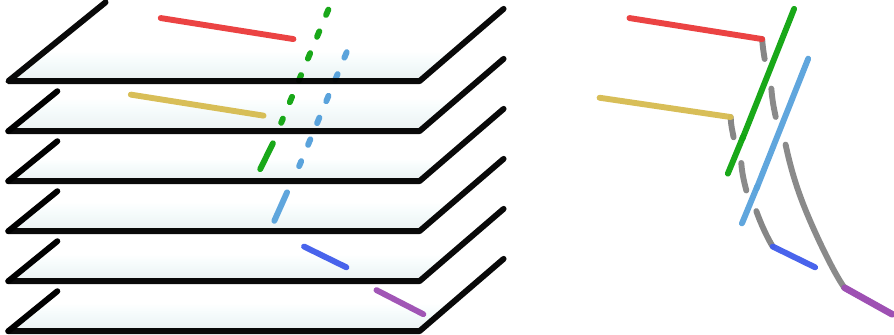}
\centering
\caption{We embed strands which came from the same seed in $D_G$ in adjacent sheets.
\label{fig:connectparallel}}
\end{figure}

From this procedure we get an embedding of each strand of $D_L$ in a plane (parallel to the $xy$-plane) in $\mathbb{R}^3$. Now we connect up the strands in $\mathbb{R}^3$ to form a ribbon link whose core is our spatial graph. We then have an embedding of our spatial graph with bridge index $k$, since when we collapse the ribbon link to its core, any induced seeds in $D_L$ which came from the same seed in $D_G$ will collapse back to one seed. As we started with a Wirtinger number $k$ diagram, we can conclude that the Wirtinger number of $G$ is bounded below by the bridge index of $G$.
\end{proof}

Just as in \cite{blair2020wirtinger}, our main result gives a way to pass between two classical ways to define the bridge index for links: one in terms of the number of overpasses and the other in terms of the number of local maxima. (Our definition for bridge index is in terms of intersections with the bridge sphere, but this is a rephrasing of the maxima definition necessitated by the setting of spatial graphs; see the comments before \autoref{def:bridge}.) We elaborate how to go between these formulations for spatial graphs now.

An \textit{overpass} is a strand in a spatial graph diagram that contains at least one overcrossing (and by definition, contains no undercrossings). We define the \textit{overpass bridge index of a spatial graph diagram} with overpasses $\{O_1,\ldots,O_n\}$ to be $\frac{1}{2}\sum w_i$, where $w_i$ is the weight corresponding to the strand $O_i$. The \textit{overpass bridge index of a spatial graph $G$} is the minimum value of this sum over all diagrams representing $G$.

\begin{corollary}
    The overpass bridge index equals the bridge index.
\end{corollary}

\begin{proof}
    For a given spatial graph, take a diagram realizing the overpass bridge index and declare that the overpasses are seeds. The coloring can surely be extended to the entire diagram since all of the overcrossings are colored.

    For the other inequality, take a bridge splitting realizing the bridge index. By definition, each trivial tangle is a collection of unknotted trees in a $3$-ball. This means that there exists a disk containing each unknotted tree that guides an isotopy of each tree down to the bridge splitting sphere so that at the end of the process, the trees in one trivial tangle $\tau_1$ lie completely in the bridge sphere without intersecting one another. Each component then corresponds to an overpass since performing the same process on the other trivial tangle $\tau_2$ gives another collection of trees on the bridge sphere, and we can declare that any intersection point in the diagram has an arc from $\tau_1$ crossing over an arc from $\tau_2.$
\end{proof}


\section{Lower bounds} \label{sec:bounds}
The following techniques are computable quantities that bound the bridge index of graphs from below. In the setting of links, we know by experimentation that depending on the link types, one method may be more effective than the others \cite{vo2024learning}. That is, in some instances the computer code finds enough quandle colorings, but not a rank bound, and vice versa. 

In general, both techniques require an \textit{oriented} spatial graph as input, meaning that we designate each edge as having a direction coming out of one vertex and into the other, denoted with an arrow. However we can disregard orientation when using involutory quandles (with techniques from \autoref{sec:quandle}) or groups where each generator in the label is its own inverse (with techniques from \autoref{sec:rank}).

\subsection{Quandle colorings} \label{sec:quandle}
In \cite{fleming2007virtual,niebrzydowski2010coloring} the authors generalized knot quandles to spatial graphs.

\begin{definition}\label{def:quandle}
    A \textit{quandle} is a set $X$ equipped with a binary operation $\triangleright$ satisfying the following axioms. 
\end{definition}

\begin{enumerate}
     \item[(1)] $x\triangleright x = x$ for all $x\in X$.
     \item[(2)] The map $f_y:X\rightarrow X$ defined by $f_y(x)=x\triangleright y$ is invertible for all $y \in X$. 
     \item[(3)] $(x\triangleright y) \triangleright z = (x\triangleright z) \triangleright (y\triangleright z)$ for all $x,y,z \in X$.
\end{enumerate}

A \textit{consistent labeling}\footnote{Various authors also use the term \textit{quandle coloring}, \textit{$X$-coloring}, or simply \textit{coloring} for this} of a diagram by a quandle is an assignment of a quandle element to each arc, such that arcs connected to the same vertex receive the same label, and the relation in \autoref{fig:quanrel} is satisfied at each crossing. It can be shown that for knots and links, the number of consistent labelings is unchanged by the Reidemeister moves. For general spatial graphs, however, this is no longer true.

\begin{figure}[ht!]
\labellist
\small\hair 2pt
\pinlabel {$y$} at 73 20
\pinlabel {$x\triangleright y$} at 110 119
\pinlabel {$x$} at -5 20
\endlabellist
\centering
\includegraphics[width=2cm]{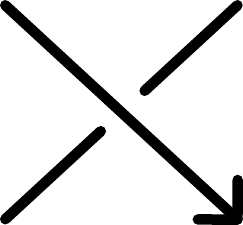}
\caption{The relation at each crossing in a quandle labeling. \label{fig:quanrel}}
\end{figure}

The move for spatial graphs which interacts with the vertices, as seen in \autoref{fig:nottricol}, forces the quandles that are suitable for spatial graphs to have a more restrictive form. For instance, the coloring by the dihedral quandle $R_3$ (Fox 3-coloring) no longer works for some graphs, such as those containing the situation in \autoref{fig:nottricol}, due to this new graph move. 

\begin{figure}[ht!]
\centering
\includegraphics[width=6cm]{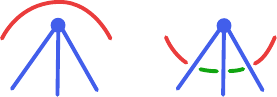}
\caption{For a quandle coloring to work, the strands near a crossing need to all receive the same color or all distinct colors. On the left, the strands are disjoint and the arcs connected to the same vertex receive one color. If the other strand receives a different color, after we perform a move that does not change the graph type, we are forced to have a crossing that uses only two colors.}\label{fig:nottricol}
\end{figure}

On the other hand, the order four Alexander quandle with the following multiplication table works for the new graph move on a degree three vertex. 
\begin{align*} 
0 = 0 \triangleright 0 = 1 \triangleright 2 = 2 \triangleright 3 = 3 \triangleright 1 \\
1 = 0 \triangleright 3 = 1 \triangleright 1 = 2 \triangleright 0 = 3 \triangleright 2 \\
2 = 0 \triangleright 1 = 1 \triangleright 3 = 2 \triangleright 2 = 3 \triangleright 0 \\
3 = 0 \triangleright 2 = 1 \triangleright 0 = 2 \triangleright 1 = 3 \triangleright 3
\end{align*}

This motivates the following definitions.

 \begin{definition}\label{def:nquandle}
    An \textit{$n$-quandle} is a set $X$ equipped with a binary operation $\triangleright$ satisfying the quandle axioms in \autoref{def:quandle} and one additional axiom $((x\triangleright y)\triangleright y) \triangleright \cdots \triangleright y)=x$ for all $x,y\in X$, where the total number of quandle operations in the additional axiom is $n$. 
\end{definition}

 \begin{definition}\label{def:coloring}
    Give a finite $n$-quandle $X$ and a spatial graph $G$, the \textit{coloring number} $Col_X(G)$ is the number of consistent labelings of $G$ by $X$.
\end{definition}

 Recall that when we compute the bridge index, which is shown to be equal to the Wirtinger number by \autoref{thm:main}, we assign each seed a weight. The traditional quandle counting invariant will give a bound on the number of seeds unweighted. When we apply it for practical purposes in \autoref{sec:examples}, we will add in the weight consideration. 

\begin{definition} \label{def:unweighted}
    Given a spatial graph $G$, define the \textit{unweighted bridge index} $\widehat{\beta}(G)$ to be the minimum number of seeds over all diagrams of $G$ (with no weights assigned).
\end{definition}

\begin{proposition} \label{prop:color}
    Let $X$ be a finite $n$-quandle of order $|X|$. Then $Col_X(G)\leq |X|^{\widehat{\beta}(G)}$. In other words, $$\log_{|X|} Col_X(G)\leq \widehat{\beta}(G).$$
\end{proposition}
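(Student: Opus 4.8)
My plan is to combine \autoref{thm:main} with the observation that a quandle labeling is rigidly propagated by exactly the data tracked by a coloring move. By \autoref{thm:main} we have $\beta(G)=\omega(G)$, so I would fix a diagram $D$ of $G$ realizing the Wirtinger number, together with a set $S$ of $r:=\omega(G)=\beta(G)$ seeds whose colors extend over all of $D$ through a sequence of coloring moves $(A_0,f_0)\rightarrow\cdots\rightarrow(A_m,f_m)$. Since $Col_X$ is invariant under the moves relating diagrams of $G$ (this is precisely what the extra axiom of an $n$-quandle guarantees, in addition to the quandle axioms of \autoref{def:quandle}), it suffices to bound the number of consistent $X$-labelings of this one diagram $D$.

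First I would record that each seed consumes at most one free parameter. A consistent labeling assigns an element of $X$ to every strand and, by definition, a common element to all strands meeting at a vertex; hence an arc seed and a pod seed each determine a single element of $X$. Thus restriction to $S$ gives a map $\rho\colon\{\text{consistent }X\text{-labelings of }D\}\to X^{S}$, and $|X^{S}|=|X|^{r}$. The whole proof then reduces to showing that $\rho$ is injective.

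The key step is to show that a consistent labeling is completely determined by its values on the seeds, which I would prove by induction on the coloring stage $t$, in parallel with \autoref{lem:connectivity}. At stage $t$ the coloring move introduces a single new strand $s_i$ at a crossing $c$ where the overstrand and one of the two understrands are already determined. In a consistent labeling the three strands at $c$ obey the relation of \autoref{fig:quanrel}: if $y$ labels the overstrand and $x$ the incoming understrand, then the outgoing understrand is labeled $x\triangleright y$. If $s_i$ is the outgoing understrand, its label $x\triangleright y$ is forced by the two known labels; if instead $s_i$ is the incoming understrand, its label is forced as the unique preimage $f_y^{-1}(x\triangleright y)$, which exists and is unique because $f_y$ is invertible by axiom (2) of \autoref{def:quandle}. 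Either way the label of $s_i$ is uniquely determined, and consistency at the non-seed pods is automatic because we begin from a labeling already assumed consistent. Hence any two consistent labelings agreeing on $S$ agree at every stage, so $\rho$ is injective.

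Injectivity of $\rho$ yields $Col_X(D)\le|X|^{r}=|X|^{\beta(G)}$, and invariance gives $Col_X(G)=Col_X(D)$, so $Col_X(G)\le|X|^{\beta(G)}$; taking $\log_{|X|}$ gives the stated inequality. The step I expect to demand the most care is the backward direction of the coloring move: the move is symmetric in the two understrands, so when the newly colored strand is the \emph{incoming} rather than the outgoing understrand one must invert $f_y$, and it is exactly quandle axiom (2) that makes this inversion well-defined. A secondary point to handle carefully is the bookkeeping that a pod seed contributes a single element of $X$ rather than one element per incident strand; this is where the vertex-consistency requirement of a labeling, together with the convention that pod seeds are monochromatic, is used.
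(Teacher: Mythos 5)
Your proof is correct, and its engine is the same as the paper's: quandle labels propagate uniquely along coloring moves (forward via $x\triangleright y$, backward via $f_y^{-1}$, using axiom (2) of \autoref{def:quandle}), so a consistent labeling is determined by its values on the seeds, giving a bound of $|X|^{\#\mathrm{seeds}}$. Where you differ is the route. The paper never invokes \autoref{thm:main}: it takes a bridge-index-minimizing diagram, uses the $\beta(G)$ upper pods and local maxima as the seeds, and cites the braid-propagation argument from \autoref{prop:wirtatmostbr} to conclude that these labels determine all others. You instead take a Wirtinger-number-minimizing diagram, prove injectivity of the restriction-to-seeds map by induction along the coloring sequence, and then convert $\omega(G)$ into $\beta(G)$ via \autoref{thm:main} (note that only the inequality $\omega(G)\leq\beta(G)$ of \autoref{prop:wirtatmostbr} is actually needed, so you are not using the hard direction of the theorem). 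Your route buys the intermediate statement $Col_X(G)\leq |X|^{\omega(G)}$, valid for any Wirtinger-minimizing diagram rather than only bridge-position ones, and an explicit treatment of the backward coloring move --- a point the paper glosses over, since even in a braid the understrand exiting a crossing may be the preimage rather than the image under $f_y$, depending on the crossing sign. The paper's route buys logical independence from \autoref{thm:main}, so the proposition rests on \autoref{prop:wirtatmostbr} alone. One shared caveat: both arguments bound the number of labelings of a single chosen diagram and therefore implicitly use that $Col_X$ is diagram-independent; you at least flag this and attribute it to the $n$-quandle axiom, whereas the paper leaves it tacit (and strictly speaking invariance for graphs with vertices of several different degrees requires the corresponding axiom for each degree that occurs).
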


\begin{proof}
By \autoref{thm:main}, there exists a diagram $D$ realizing the $\widehat{\beta}(G)$ seeds. There are $|X|$ choices of quandle elements from $X$ to assign to each seed. The coloring move in \autoref{fig:moves1} can be regarded as a less restricted version of the quandle rule in \autoref{fig:quanrel}. More precisely, note that the rule in \autoref{fig:quanrel} says that the outgoing quandle labeling is a result of using the quandle operation on the incoming arc and the overstrand, which is analogous to saying that the outgoing arc receives a coloring once the incoming arc and the overstrand receive colors in \autoref{fig:moves1}. Thus, there are $|X|^{\widehat{\beta}(G)}$ possibilities of quandle labels for the seeds that will generate the quandle labeling for the entire diagram. The reason why $Col_X(G)\leq |X|^{\widehat{\beta}(G)}$ is possibly not an equality is because not every labeling generated from the seeds is consistent.
\end{proof}

We now provide a lower bound for the bridge index of vertex sums of spatial graphs (recall \autoref{sec:graphs}) in terms of quandles, which will be useful to us in \autoref{sec:examples}. The argument is based on \cite{clark2016quandle}. The key is the existence of quandles called \textit{homogeneous quandles}, which have the property that for any quandle elements $x,y,$ there is an automorphism $h$ such that $h(x)=y.$ Note that the map $f_y$ sending $x$ to $x\triangleright y$ in the second axiom of \autoref{def:quandle} is an example of an automorphism by the way the axiom is stated. The reader can check from the table before \autoref{def:nquandle} that the Alexander quandle of order four is homogeneous.

\begin{proposition} \label{prop:homogeneous}
Let $X$ be a homogeneous $n$-quandle, and $G\#_n G'$ be the $n$-valent vertex sum of spatial graphs $G$ and $G'$. Then $$Col_X(G\#_n G')= \frac{1}{|X|}Col_X(G)\cdot  Col_X(G').$$ Thus, we achieve the lower bound $$\log_{|X|} Col_X(G) + \log_{|X|} Col_X(G') - 1 \leq \widehat{\beta}(G\#_n G').$$
\end{proposition}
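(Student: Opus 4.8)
The plan is to first establish a bijection between $X$-colorings of $G\#_n G'$ and pairs of $X$-colorings of $G$ and $G'$ that agree on the color assigned at the gluing vertex, then to use homogeneity to count these pairs, and finally to feed the resulting count into \autoref{prop:color}.

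First I would analyze the gluing. Forming $G\#_n G'$ removes neighborhoods of the degree-$n$ vertices $p_1\in G$ and $p_2\in G'$ and identifies the resulting spheres, which joins the $n$ edge-ends incident to $p_1$ to the $n$ edge-ends incident to $p_2$, producing $n$ through-edges and erasing both vertices. Under our convention that all edges meeting a common vertex receive the same label (see the remark after \autoref{def:nquandle}), a coloring of $G$ assigns a single color $a\in X$ to every strand at $p_1$, and a coloring of $G'$ assigns a single color $b\in X$ to every strand at $p_2$. Since each through-edge of $G\#_n G'$ must carry a single color, consistency forces $a=b$ for every choice of edge-matching; conversely, any pair of colorings with $a=b$ glues to a valid coloring of $G\#_n G'$ because the crossing relations away from the gluing locus are untouched and no crossing occurs at the gluing sphere. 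This yields the identity
$$Col_X(G\#_n G') = \sum_{c\in X} N_G(c)\, N_{G'}(c),$$
where $N_G(c)$ (resp. $N_{G'}(c)$) denotes the number of colorings of $G$ (resp. $G'$) assigning color $c$ at $p_1$ (resp. $p_2$).

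Next I would exploit homogeneity to show each summand is independent of $c$. A quandle automorphism $h$ satisfies $h(x\triangleright y)=h(x)\triangleright h(y)$, so post-composing any coloring with $h$ preserves both the crossing relation of \autoref{fig:quanrel} and the equal-labels-at-a-vertex condition, hence sends colorings to colorings. Given colors $c,c'\in X$, homogeneity supplies an automorphism $h$ with $h(c)=c'$, and post-composition then gives a bijection between the colorings counted by $N_G(c)$ and those counted by $N_G(c')$. Therefore $N_G(c)=Col_X(G)/|X|$ for every $c$, and likewise $N_{G'}(c)=Col_X(G')/|X|$. Substituting into the sum gives
$$Col_X(G\#_n G') = \sum_{c\in X} \frac{Col_X(G)}{|X|}\cdot\frac{Col_X(G')}{|X|} = \frac{Col_X(G)\cdot Col_X(G')}{|X|},$$
which is the claimed formula.

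Finally, applying \autoref{prop:color} to the graph $G\#_n G'$ gives $\log_{|X|} Col_X(G\#_n G')\leq \beta(G\#_n G')$, and taking $\log_{|X|}$ of the product formula turns the right-hand side into $\log_{|X|} Col_X(G)+\log_{|X|} Col_X(G')-1$, yielding the stated lower bound. I expect the main obstacle to be the careful bookkeeping in the first step: verifying that the gluing really forces the two vertex colors to coincide for every choice of edge-matching, and that every compatible pair extends uniquely across the gluing sphere. Everything after that is a homogeneity-driven averaging argument combined with a direct appeal to \autoref{prop:color}.
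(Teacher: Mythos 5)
Your proof follows essentially the same route as the paper's: both use homogeneity to show, via post-composition with an automorphism carrying one vertex color to another, that the number of colorings of $G$ with a prescribed color at the summing vertex is $Col_X(G)/|X|$, then count compatible pairs of colorings to obtain the product formula, and finally feed that into \autoref{prop:color}. One concrete caveat: your argument that every coloring of $G\#_n G'$ splits into a compatible pair rests on the claim that ``each through-edge of $G\#_n G'$ must carry a single color,'' which is false as stated --- quandle elements are assigned to \emph{strands} (arcs ending at undercrossings), not to edges, so a through-edge can change color each time it passes under a crossing. What is actually needed is that in any coloring of $G\#_n G'$ the $n$ strands crossing the gluing sphere receive a common color; this is precisely the point the paper's own proof also leaves implicit (it simply asserts the product count), so your write-up is no less rigorous than the paper here, but the justification you offer for it does not work. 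Note that the downstream inequality --- the part of the proposition actually used later --- only requires the direction you do establish correctly: gluing compatible pairs is injective, giving $Col_X(G\#_n G') \geq \frac{1}{|X|}Col_X(G)\cdot Col_X(G')$, and combining this with \autoref{prop:color} already yields $\log_{|X|} Col_X(G) + \log_{|X|} Col_X(G') - 1 \leq \beta(G\#_n G')$.
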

 
\begin{proof}
    The number of colorings where a fixed color $x$ is assigned to the arcs adjacent to a vertex $v$ is precisely $Col_X(G)/|X|$. To see this, observe that the number of colorings $Col_X(G)=\sum_y Col_X(G,y)$, where $Col_X(G,y)$ denotes the number of colorings such that the arcs near $v$ get the label $y$. We can get another coloring in which the arcs adjacent to $v$ are colored $z\neq y$ by applying an automorphism $h$ such that $h(y)=z,$ which exists by the homogeneity assumption. Thus, $Col_X(G)=|X|Col_X(G,y)$.

    Now we consider the vertices of $G$ and $G'$ that we will do the vertex sum along. For any fixed coloring of $G$, we look at the coloring at the vertex; call it $x$. Then there are $\frac{1}{|X|}Col_X(G')$ colorings for $G'$ that have the coloring $x$ at the vertex to match up with $G$. Therefore, $Col_X(G\#_n G')= \frac{1}{|X|}Col_X(G)\cdot  Col_X(G')$. The stated inequality comes from combining this equation with the result of \autoref{prop:color} and simplifying.
\end{proof}

The condition that the colors of the arcs adjacent to a vertex have to be the same can be relaxed. In fact, this may be needed to bound our definition of the bridge index in which we count the number of intersections of the spatial graph with bridge spheres (see \autoref{def:bridge} and the following discussion). This is pointed out in \cite{ishii1997color} with a brief argument and a reference to the proof in a different paper.\footnote{To the best of the authors' knowledge, this other paper never appeared in the literature.} We provide the proof again here for accessibility and completeness, as some examples in \autoref{sec:examples} will need this type of coloring.

\begin{definition} \label{def:gen}
A \textit{generalized $3$-coloring of a $4$-regular spatial graph diagram} is an assignment of three elements $\{0,1,2\}$ to the arcs such that the following two properties hold.
\begin{enumerate}
    \item If $x,y,$ and $z$ are the incoming arc, outgoing arc, and the overstrand at a crossing respectively, then $2z-y = x \pmod 3$.
    \item Let $a_1,a_2,a_3,a_4$ be labels around a vertex arranged in a counterclockwise fashion. Then $a_1-a_2+a_3-a_4 = 0 \pmod 3$.
\end{enumerate}
\end{definition}

\begin{proposition} \label{prop:gen}
    The number of generalized $3$-colorings of a $4$-regular spatial graph diagram is a spatial graph invariant.
\end{proposition}

\begin{proof}
    The condition around each crossing is satisfied because tricolorability is known to be a knot invariant. At a vertex, we have to check that there is a bijection before and after Reidemeister moves. In \autoref{fig:quandmorecol}, all the Reidemeister moves involving vertices are depicted. The labels after a chain of quandle rules for the top left picture will give $y=2a-x, z=2b-y, w=2c-z,$ and $t=2d-w.$ In other words, $t=2d-(2c-(2b-(2a-x))) = -2(a-b+c-d)+x = x$ since $a-b+c-d = 0 \pmod 3.$ For the other version of the same move, we have that $e=2x-a, f=2x-b,g=2x-c,$ and $h=2x-d$. Since we have $e-f+g-h=(2x-a)-(2x-b)+(2x-c)-(2x-d) = 0 \pmod 3$, the coloring is invariant.

    For the twisting move at a vertex, we check that the sum $j-i+l-r = 0,$ but $r= 2l-k$ so that $j-i+l-2l+k = j-i-l+k = 0.$ 
\end{proof}

\begin{figure}[ht!]
\labellist
\footnotesize\hair 2pt
\pinlabel  {$a$} at 8 15
\pinlabel  {$b$} at 43 0
\pinlabel  {$c$} at 98 0
\pinlabel  {$d$} at 135 15
\pinlabel  {$x$} at 2 46
\pinlabel  {$y$} at 43 25
\pinlabel  {$z$} at 70 22
\pinlabel  {$w$} at 97 25
\pinlabel  {$t$} at 134 46

\pinlabel  {$a$} at 180 15
\pinlabel  {$b$} at 216 0
\pinlabel  {$c$} at 270 0
\pinlabel  {$d$} at 307 15
\pinlabel  {$x$} at 173 50
\pinlabel  {$x$} at 308 50

\pinlabel  {$a$} at 351 15
\pinlabel  {$b$} at 388 0
\pinlabel  {$c$} at 442 0
\pinlabel  {$d$} at 479 15
\pinlabel  {$x$} at 347 46
\pinlabel  {$e$} at 387 60
\pinlabel  {$f$} at 398 46
\pinlabel  {$g$} at 430 44
\pinlabel  {$h$} at 439 63

\endlabellist
\centering
\includegraphics[width=11cm]{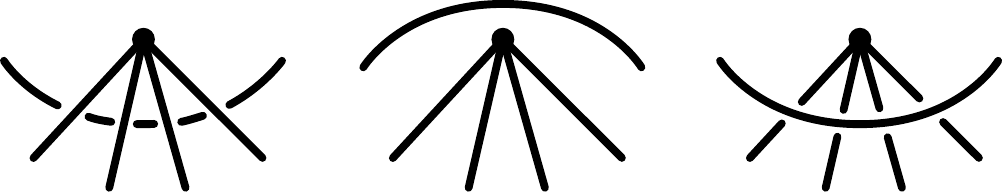}
\vspace{2em}

\labellist
\small\hair 2pt
\pinlabel  {$i$} at 20 1
\pinlabel  {$j$} at 5 94
\pinlabel  {$k$} at 110 1
\pinlabel  {$\ell$} at 120 94

\pinlabel  {$i$} at 197 1
\pinlabel  {$j$} at 180 94
\pinlabel  {$k$} at 332 1
\pinlabel  {$\ell$} at 347 94
\pinlabel  {$r$} at 300 94
\endlabellist
\includegraphics[width=6cm]{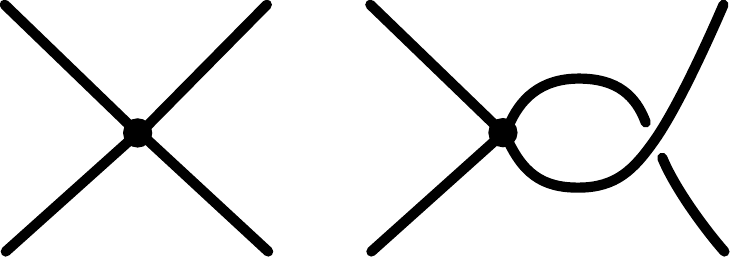}
\caption{The number of generalized tricolorings is preserved by Reidemeister moves involving vertices. \label{fig:quandmorecol}}
\end{figure}

\begin{remark}
McAtee, Silver, and Williams consider colorings of spatial graphs using topological groups in \cite{mcatee2001coloring}. Since any topological group is a quandle \cite{rubinsztein2007topological}, it would be interesting to combine the quandle techniques discussed in this section with the techniques of \cite{mcatee2001coloring}. 
\end{remark}

\subsection{Meridional rank} \label{sec:rank}
It is a straightforward exercise to see that any group produces a quandle structure given by $a\triangleright b = b^{-1}ab$. In fact, one motivation for quandle theory is to generalize the notion of conjugation in group theory. In the previous subsection, we counted the number of quandle colorings to get a bound on the number of seeds. When the quandle comes from a group, and the group is well-understood in terms of how to generate it, it suffices to find just one coloring (we do not have to count all possible colorings) to get a bound on the number of seeds. This is what we do in this subsection.

A presentation for the fundamental group of the complement of a spatial graph $G$ in $S^3$ can be computed via the Wirtinger algorithm in a similar way to the computations for knots and links (this can be proven using Van Kampen's theorem). Each arc of a spatial graph diagram gives a generator, and each crossing gives a relation of the form 
$x_ix_jx^{-1}_ix^{-1}_k$.
At a vertex, we have a new type of relation of the form $x_{i_1}^{\epsilon_1}x_{i_2}^{\epsilon_2}\cdots x_{i_n}^{\epsilon_n}$, where the generators are listed in order as we go clockwise around a vertex, and $\epsilon_i$ is 1 (resp. $-1$) if the arc is directed into the vertex (resp. directed out of the vertex). See \autoref{fig:wirtgraph}.

\begin{figure}[ht!]
\labellist
\small\hair 2pt
\pinlabel {$x_{i}$} at 78 20
\pinlabel {$x_{j}$} at 78 63
\pinlabel {$x_{k}$} at -5 20
\pinlabel {$x_{i_1}$} [tr] at 155 36
\pinlabel {$x_{i_2}$} [tr] at 190 36
\pinlabel {$x_{i_n}$} [tr] at 265 36
\endlabellist
\centering
\includegraphics[width=6cm]{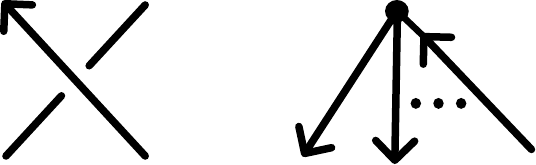}
\caption{On the left is the relation 
$x_ix_jx^{-1}_ix^{-1}_k$ at each crossing. On the right is the relation $x_{i_1}^{\epsilon_1}x_{i_2}^{\epsilon_2}\cdots x_{i_n}^{\epsilon_n}$ at a vertex.\label{fig:wirtgraph}}
\end{figure}

The technique of bounding the number of handles of knotted objects by the meridional rank of the fundamental group has been used before in other settings such as classical links \cite{baader2021coxeter} and surface-links \cite{joseph2024meridional}. We now adapt the method to the setting of spatial graphs.

In \cite{livingston1995knotted} Livingston defined the \textit{vertex constant group} $\pi^*$ to be the quotient of $\pi_1(S^3\backslash G)$ defined by setting all meridians at each vertex equal to each other. We are interested in bounding the rank of the vertex constant group, where our generating set consists of meridians. We achieve this by searching for a consistent labeling of our diagram by reflections in a Coxeter group $C(\Delta)$. This will produce a surjective homomorphism from $\pi^*$ to $C(\Delta)$.

\begin{definition} \label{def:Coxeter}
    Recall that a presentation of a \textit{Coxeter group} $C(\Delta)$ can be obtained from a weighted graph $\Delta$ as follows. Label the vertices of $\Delta$ as $x_1,x_2,\ldots, x_n$, which will be the generators. If there is an edge with weight $w$ connecting $x_i$ to $x_j$, then we have a relation $(x_ix_j)^w$ in the presentation. Additionally, the relations also include $x_i^2,$ for $i=1,2,\ldots, n.$
\end{definition}

The number of vertices of $\Delta$ is referred to as the \textit{reflection rank}. The following lemma is important in obtaining our bound. 

\begin{lemma}[{\cite[Lem.~2.1]{felikson2010reflection}}]
    If all edge weights of $\Delta$ are at least $2$, then the reflection rank equals the minimal number of reflections needed to generate $C(\Delta).$\label{lem:algebraresult}
\end{lemma}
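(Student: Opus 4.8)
The plan is to prove this via the geometric (Tits) representation of the Coxeter group, reducing it to an elementary linear-algebra statement about reflections in $GL(V)$; since the result is quoted from \cite{felikson2010reflection}, I would present this as the standard argument. Write $n$ for the reflection rank, i.e.\ the number of vertices of $\Delta$, and let $m_{ij}\in\{2,3,\dots\}\cup\{\infty\}$ be the order of $x_ix_j$ (so $m_{ij}$ is the weight of the edge joining $x_i,x_j$, with $m_{ij}=\infty$ when there is no edge). The hypothesis that all edge weights are at least two is exactly what guarantees that the $m_{ij}$ form a genuine Coxeter matrix and that $x_1,\dots,x_n$ is a minimal generating set of $n$ distinct involutions, with no relation collapsing two generators. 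I would then set $V=\mathbb{R}^n$ with basis $e_1,\dots,e_n$ and symmetric bilinear form $B(e_i,e_j)=-\cos(\pi/m_{ij})$ (using $-\cos(\pi/\infty)=-1$), and let $\rho\colon C(\Delta)\to GL(V)$ be the representation in which $x_i$ acts by $v\mapsto v-2B(v,e_i)e_i$.

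The upper bound is immediate: $x_1,\dots,x_n$ are $n$ reflections generating $C(\Delta)$, so the minimal number of generating reflections is at most $n$. For the lower bound, suppose $t_1,\dots,t_k$ are reflections that generate $C(\Delta)$. Since every reflection in a Coxeter group is conjugate to some $x_i$, each $\rho(t_l)$ is a linear reflection $v\mapsto v-2B(v,\alpha_l)\alpha_l$ for a nonzero root vector $\alpha_l$, and $\operatorname{im}(\rho(t_l)-I)=\langle\alpha_l\rangle$. Put $U=\operatorname{span}(\alpha_1,\dots,\alpha_k)$, so that $\dim U\le k$. A one-line computation, $\rho(t_l)(\alpha_{l'})=\alpha_{l'}-2B(\alpha_{l'},\alpha_l)\alpha_l\in U$, shows that each $\rho(t_l)$ maps $U$ into $U$, and hence $U$ is invariant under all of $C(\Delta)=\langle t_1,\dots,t_k\rangle$.

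The crux is to show that $\operatorname{im}(\rho(w)-I)\subseteq U$ for every $w\in C(\Delta)$. This follows by induction on word length in the $t_l$, using the identity $\rho(t_l)\rho(w')-I=\rho(t_l)\bigl(\rho(w')-I\bigr)+\bigl(\rho(t_l)-I\bigr)$ together with the $C(\Delta)$-invariance of $U$ established above. Applying this with $w=x_i$ and using $\operatorname{im}(\rho(x_i)-I)=\langle e_i\rangle$ forces $e_i\in U$ for each $i$; since $\{e_1,\dots,e_n\}$ is a basis of $V$, this yields $U=V$ and therefore $k\ge\dim U=n$. Combining with the upper bound gives exactly $n$, the reflection rank.

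The main thing to get right is not an estimate but the bookkeeping around the geometric representation: one must check that the weight-$\ge 2$ hypothesis really does make $\{e_i\}$ a basis and $\rho$ the standard representation of a rank-$n$ Coxeter system, so that reflection rank genuinely equals $\dim V$, and that abstract reflections of $C(\Delta)$ correspond precisely to linear reflections under $\rho$. A useful simplification is that faithfulness of $\rho$ is \emph{not} needed anywhere — only that it is a representation with the stated action on roots — which keeps the whole argument elementary and avoids invoking Tits' theorem.
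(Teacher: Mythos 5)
Your proof is correct, but note that the paper itself offers no proof of this lemma to compare against: it is quoted verbatim as Lemma~2.1 of the cited reference \cite{felikson2010reflection}, and the authors use it as a black box. What you have written is essentially the standard linear-algebra argument via the Tits (geometric) representation, and it holds together: the upper bound is trivial, and for the lower bound the key chain --- each reflection $t_l$ acts as a linear reflection in a root $\alpha_l$ (using $B$-invariance of the Tits form under $\rho$), the span $U$ of the $\alpha_l$ is $\rho(C(\Delta))$-invariant, the operator identity $\rho(t_l)\rho(w')-I=\rho(t_l)\bigl(\rho(w')-I\bigr)+\bigl(\rho(t_l)-I\bigr)$ propagates $\operatorname{im}(\rho(w)-I)\subseteq U$ to all of $C(\Delta)$, and then $w=x_i$ forces $e_i\in U$, so $k\ge\dim U=n$ --- is complete, and you are right that faithfulness of $\rho$ is never needed. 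Two small wording points: your claim that the weight-$\ge 2$ hypothesis makes $x_1,\dots,x_n$ a \emph{minimal} generating set is exactly the conclusion being proved, so it should not be stated as part of the setup (fortunately your argument never uses it, only that the $m_{ij}$ form a valid Coxeter matrix so that $\rho$ is well defined); and the statement that every reflection is conjugate to some $x_i$ is the \emph{definition} of reflection in this context rather than a fact requiring proof, which is worth saying explicitly since the lemma's statement presupposes that notion.
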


\begin{definition}
    A labeling of arcs in a spatial graph diagram $D$ by a Coxeter group is \textit{consistent} if at each crossing where the overstrand is labeled with an element $g$ and the two understrands are labeled $h$ and $k$, we have that $ghg = k$. We say that the label \textit{generates} the group if every element of $C(\Delta)$ can be written as a product of the elements that appear as labels along with their inverses.
\end{definition}

Note that in Coxeter groups the generators are involutions, so the role of $h$ and $k$ can be interchanged in the definition above.

\begin{proposition}
If a diagram $D$ representing a spatial graph $G$ can be labeled by $n$ reflections from a Coxeter group $C(\Delta)$ with reflection rank $n$, then $\widehat{\beta}(G)\geq n.$\label{prop:reflection}
\end{proposition}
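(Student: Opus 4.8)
The plan is to show that a consistent labeling of $D$ by $n$ reflections that generate $C(\Delta)$ produces a surjective homomorphism from the vertex constant group $\pi^*$ onto $C(\Delta)$, and then to combine this with \autoref{lem:algebraresult} and the fact that the meridional rank of $\pi^*$ bounds $\beta(G)$ from below. First I would verify that the labeling rule ``$ghg=k$ at each crossing'' is exactly the Wirtinger relation $x_i x_j x_i^{-1} = x_k$ after sending each meridian generator to its assigned reflection (using that reflections are involutions, so $g^{-1}=g$). The vertex relations $x_{i_1}^{\epsilon_1}\cdots x_{i_n}^{\epsilon_n}$ require that all meridians at a vertex map to the same reflection, which is guaranteed because in $\pi^*$ all meridians at each vertex are identified; I would check that the product relation is then automatically satisfied (or imposes no new constraint) once the generators around a vertex all carry the same label. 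This establishes a well-defined homomorphism $\varphi\colon \pi^* \to C(\Delta)$.

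Next I would argue surjectivity: by hypothesis the $n$ labels generate $C(\Delta)$, and each label is the image of a meridian generator of $\pi^*$, so $\varphi$ is onto. The key structural input is that $\beta(G)$ bounds the minimal number of meridians needed to generate $\pi^*$ from above — that is, the meridional rank of the vertex constant group is at most $\beta(G)$. This is the analogue for spatial graphs of the classical fact that a bridge presentation with $\beta$ bridges exhibits $\pi_1$ of the complement as normally generated by $\beta$ meridians; in the bridge-split picture of \autoref{sec:result}, each upper pod or local maximum contributes one meridian generator, so there is a generating set of $\pi^*$ by $\beta(G)$ meridians. I would either cite this or reconstruct it briefly from the bridge splitting, noting that conjugates of these $\beta(G)$ meridians normally generate, and under the constant-meridian quotient they generate.

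The final step assembles the inequality. A surjection $\pi^* \twoheadrightarrow C(\Delta)$ carrying meridians to reflections means that the minimal number of reflections generating $C(\Delta)$ is at most the minimal number of meridians generating $\pi^*$, which is at most $\beta(G)$. By \autoref{lem:algebraresult}, when all edge weights of $\Delta$ are at least two, the minimal number of generating reflections equals the reflection rank $n$. Hence $n \leq \beta(G)$, which is the claim.

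I expect the main obstacle to be the careful justification that meridional rank of $\pi^*$ is bounded above by $\beta(G)$ in the spatial graph setting, since pods complicate the usual ``one meridian per bridge'' dictionary: a pod of degree $d$ identifies several meridians, and one must confirm that each upper pod still contributes only a single generator to $\pi^*$ (which it does, precisely because $\pi^*$ collapses all meridians at a vertex to one). A secondary subtlety is checking that the vertex relations are consistent under the reflection labeling — one must ensure the signed product $x_{i_1}^{\epsilon_1}\cdots x_{i_n}^{\epsilon_n}$ maps to the identity in $C(\Delta)$, which with equal labels $g$ at the vertex becomes $g^{\sum \epsilon_i}$, and this vanishes only if the signed exponent count is even or if $\varphi$ is designed to respect it; I would handle this by observing that the labeling is defined on arcs so that the vertex relation is respected by construction of $\pi^*$ rather than imposing an extra condition.
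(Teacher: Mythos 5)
Your overall architecture (consistent labeling gives a surjection onto $C(\Delta)$ carrying meridians to reflections; a bridge splitting gives $\beta(G)$ meridian generators; conclude via \autoref{lem:algebraresult}) matches the prose of \autoref{sec:rank}, but the step you flagged as a ``secondary subtlety'' and then waved away is a genuine failure point, and your proposed resolution is false. Passing to the vertex constant group $\pi^*$ does not make the vertex relations harmless --- it makes the obstruction sharper. In $\pi^*$ the relation at a degree-$d$ vertex becomes $m^{\epsilon_1+\cdots+\epsilon_d}=1$ for the common meridian $m$, and $\epsilon_1+\cdots+\epsilon_d \equiv d \pmod 2$ no matter how the edges are oriented. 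So at any vertex of odd degree, $m$ has odd order in $\pi^*$; a homomorphism to $C(\Delta)$ sending $m$ to a reflection would force $m$ to map to an element of order dividing both $2$ and an odd number, i.e.\ to the identity, which a reflection never is. Hence the surjection $\varphi\colon \pi^* \twoheadrightarrow C(\Delta)$ you want to build from the labeling simply does not exist (nontrivially) when $G$ has odd-degree vertices, such as $\Theta$-graphs, and your proof collapses there. The argument is sound exactly when every vertex of $G$ has even degree --- the Eulerian setting, which is where the paper actually applies this proposition (\autoref{prop:fusegroup}, \autoref{ex:Montesinos}, with degree-four vertices from fusing) --- but that hypothesis must be stated and used; it cannot be absorbed into ``respected by construction of $\pi^*$.''

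A secondary flaw: your fallback justification that the bridge meridians ``normally generate, and under the constant-meridian quotient they generate'' is not a valid deduction. The quotient $\pi^*$ identifies only meridians sharing a vertex, not all conjugate meridians of an edge, so normal generation of $\pi_1(S^3\backslash G)$ does not descend to honest generation of $\pi^*$. The correct statement is stronger and diagrammatic: in a bridge-position diagram every Wirtinger generator is a word in the meridians of the upper pods and local maxima (this is the coloring propagation of \autoref{prop:wirtatmostbr} read algebraically), so those $\beta(G)$ meridians genuinely generate $\pi_1(S^3\backslash G)$ and hence every quotient of it. This propagation is, in fact, all the paper's own proof uses: it never constructs a group homomorphism, but runs the propagation on a bridge-index-minimizing diagram to conclude that the $\beta(G)$ labels on the upper pods and maxima generate $C(\Delta)$, then cites \autoref{lem:algebraresult}. (Note the paper's route silently assumes the labeling is available on a bridge-minimizing diagram rather than just on the given diagram $D$; supplying that transfer is precisely the job your homomorphism was meant to do, and it is exactly where the even-degree restriction enters.)
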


\begin{proof}
        In the same spirit as \autoref{prop:homogeneous}, the labels on seeds determine all the other labels. The difference is that we use a labeling by reflections in Coxeter groups in this setting. Thus, these labels generate $C(\Delta)$. We then appeal to \autoref{lem:algebraresult} to conclude that we need at least $n$ reflections to generate $C(\Delta)$.
\end{proof}

\section{Computations} \label{sec:examples}

In this section, we use our techniques to estimate or provide exact values for the bridge indices of many different examples of knotted spatial graphs, many of which are almost unknotted. Additionally, in \autoref{sec:code} we discuss the particulars of our code and in \autoref{sec:large} we prove \autoref{thm:index}. 
Refer to \autoref{sec:graphs} for a review of the spatial graph terminology we use throughout this section. We will use the term ``coloring'' for both Wirtinger coloring and quandle coloring, but the type of coloring should be clear from context. When calculating lower bounds, we will sometimes refer to both the (weighted) bridge index and the unweighted bridge index (recall \autoref{def:unweighted}).

We break into various subsections according to the type of construction, but start first with a relatively simple example.

\begin{example}[Almost unknotted graphs from unlinks] \label{ex:Kinoshita}
As a warmup, \autoref{fig:ex_arc} gives an almost unknotted handcuff graph (left) and an almost unknotted $\Theta$-graph (right). Both of these well-known examples -- for instance the $\Theta$-graph is the Kinoshita graph \cite{kinoshita1958alexander,kinoshita1972elementary} --  were constructed by adding an edge to an unlink, and both have bridge index $\tfrac{5}{2}$. These diagrams are already in bridge position demonstrating this index, but this can also be verified by finding $\tfrac{5}{2}$-Wirtinger colorings of the diagrams. 

Since there is a nontrivial coloring by the Alexander quandle of order four from \autoref{sec:quandle}, the unweighted bridge index, and thus the bridge index, is bounded below by $2$.
By analyzing the various possibilities for how the vertices are placed in the bridge splitting, \`{a} la the arguments found in \autoref{ex:Suzuki} or \autoref{ex:vertex2}, one can -- without too much difficulty -- raise the lower bound of the bridge index to $\tfrac{5}{2}$, as desired. 

Note that the number of crossings in each column of twists can be systematically increased and we still retain the consistency of the coloring. By systematic, we mean that if there are three crossings originally, we can keep adding a multiple of three crossings.

\begin{figure}[ht!]
\labellist
\small\hair 2pt
\pinlabel $0$ at 69 220
\pinlabel $0$ at 380 220
\pinlabel $0$ at 69 75
\pinlabel $2$ at 190 135
\pinlabel $2$ at 300 135
\pinlabel $3$ at 493 145
\pinlabel $2$ at 493 95
\pinlabel $2$ at -12 135
\pinlabel $1$ at 10 10
\pinlabel $1$ at 320 10
\pinlabel $1$ at 5 280
\pinlabel $1$ at 315 280
\endlabellist
\includegraphics[width=7cm]{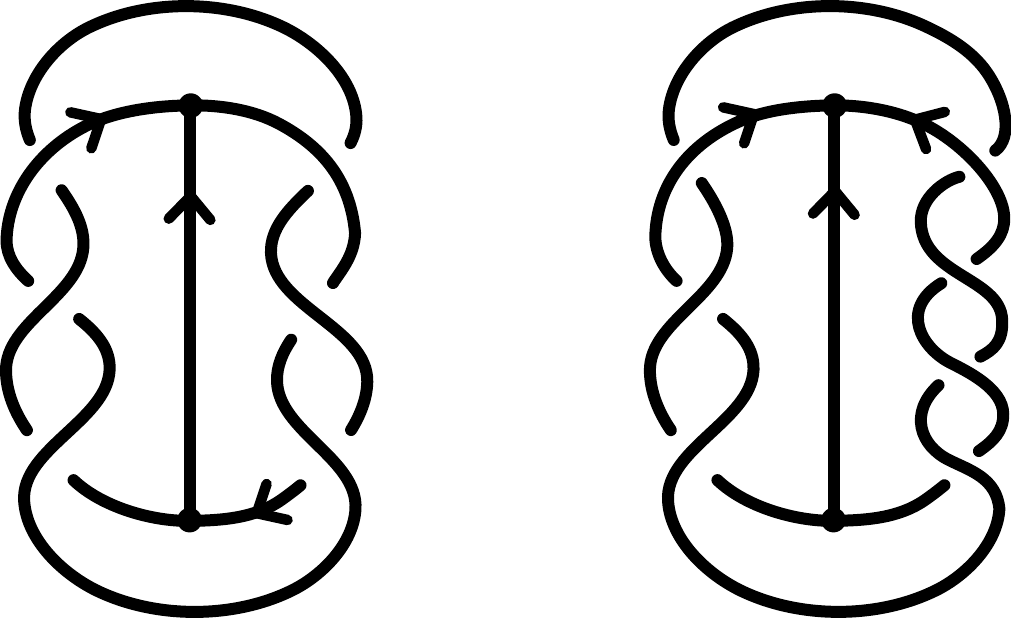}
\centering
\caption{A quandle colored almost unknotted handcuff graph (left) and a quandle colored almost unknotted $\Theta$-graph (right), both with bridge index $\tfrac{5}{2}$.\label{fig:ex_arc}}
\end{figure}
\end{example}

\subsection{Python code} \label{sec:code}

We have implemented our algorithm in Python to compute upper bounds for the Wirtinger numbers of arbitrary (knotted) $\Theta_n$-graphs ($n\le 26)$.  
Our algorithm generalizes coloring algorithms used to compute bounds for the Wirtinger numbers for knots and links. We refer readers to \cite{blair2020wirtinger, lee2024widths} for details on the knot and link cases. Our Python code can be found at 
\href{https://github.com/hanhv/graph-wirt}{https://github.com/hanhv/graph-wirt}.
While the full detail can be found in the URL, we also give a summary here. 

\subsubsection*{Notation setup}
Due to the lack of spatial graph tabulations, we generate Gauss codes of some spatial graphs from those of links. 
\begin{definition}
    A \textit{Gauss code for an $m$-component link} is a list of $m$ lists of positive and negative integers such that for all $k\ne0, |k|\le m$, both $k$ and $-k$ appear once each among the lists.
\end{definition}
\begin{definition}
A \textit{Gauss code for a spatial graph} is a list of \( n \) lists, where each list corresponds to an edge of the graph. Each edge joins two vertices and includes information about the crossings it passes through.
\end{definition}
For \(n \leq 26\), a Gauss code for a \(\Theta_n\)-graph contains \( n \) lists, each corresponding to an edge in the graph. Each edge of the graph is represented by a list of numbers and symbols in the following way. 
\begin{itemize}
    \item The list starts and ends with a symbol in the form of a letter-number pair, such as \texttt{a1} or \texttt{b2}. The letter designates the direction, represented by alphabet letters, and the number following the letter indicates a vertex of the graph. The vertex number is positive.
    \item The numbers \( |k| \le m \), where \( k \) is non-zero and different from the vertex number, correspond to crossings.
    For a valid Gauss code, every integer from $k\le |m|$, excluding zero and the vertex numbers, must appear exactly once across the entire code.  
\end{itemize}
For example, the following Gauss code 
\begin{verbatim}
gauss_code = [
    [a1, 3, -4, a2],
    [b1, -5, 6, 7, -8, 9, -7, 10, -11, 12, -10, -13, 14, b2],
    [c1, -9, 8, -3, 5, -6, 13, -14, 4, -12, 11, c2]
]
\end{verbatim}
corresponds to the graph shown in \autoref{fig:example_gc}. 

\begin{figure}[ht!]
\labellist
\small\hair 2pt
\pinlabel $1$ at 28 65
\pinlabel $a$ at 37 85
\pinlabel $b$ at 66 75
\pinlabel $c$ at 52 58
\pinlabel $2$ at 218 90
\pinlabel $a$ at 220 105
\pinlabel $b$ at 195 106
\pinlabel $c$ at 200 80

\pinlabel $3$ at 53 107
\pinlabel $4$ at 201 130
\pinlabel $5$ at 71 99
\pinlabel $6$ at 122 90
\pinlabel $7$ at 124 53
\pinlabel $8$ at 55 7
\pinlabel $9$ at 81 53
\pinlabel $10$ at 152 53
\pinlabel $11$ at 182 53
\pinlabel $12$ at 172 4
\pinlabel $13$ at 153 71
\pinlabel $14$ at 165 111
\endlabellist
\includegraphics[width=7cm]{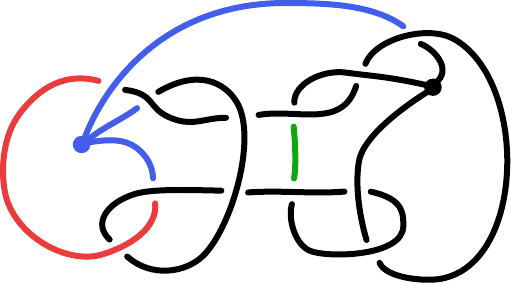}
\centering
\caption{Example of a Gauss code. This diagram is $\tfrac{7}{2}$-Wirtinger colorable using the seeds \texttt{[[a1, b1, c1], [-10, -13], [-9, 8, -3]]}, which are colored blue, green, and red respectively. \label{fig:example_gc}}
\end{figure}

\subsubsection*{Generating Gauss codes of $\Theta_4$-graphs from two-component links}
First we extract two-component link diagrams from the link table provided by SnapPy \cite{SnapPy}. Each of these links takes the form of a Gauss code made up of two lists of numbers. Selecting a two-component link, by ``singularizing'' we turn two crossings between components into 4-valent vertices. 
Here, a pair of crossings \((x, y)\) is considered \textit{singularizable} if both \(x\) and \(y\) appear in one component, while \(-x\) and \(-y\) appear in the other component. 
We only choose singularizable pairs that are of distance at least four apart (this is optional), ensuring that each edge of the graph contains at least four crossings. 
By singularizing a pair of crossings \(i\) and \(j\), we split each component of the link, viewed as a cyclic list, into two parts with endpoints at \(i\) and \(j\). 
This has the effect of turning the two-component link into a $\Theta_4$-graph. 

\subsubsection*{Computing the Wirtinger number of a graph diagram}

In the context of our code, it is convenient to \textit{truncate} our spatial graph diagrams, meaning we imagine removing small neighborhoods of vertices (of degree larger than two). We refer to these neighborhoods as \textit{pods}. After removing pods, we have two types of arcs: arcs which connect the endpoints of pods to undercrossings, and arcs which connect two undercrossings.

The program begins by identifying all strands in the Gauss code which can be classified as pods or one of the two aforementioned types of arcs. It first sets \(k=1\) and generates a list of \textit{seeds},\footnote{Note that elsewhere in the paper we refer to \textit{each} initially colored strand as a seed, but in the context of our code, we refer to the \textit{entire} collection of initial strands as a seed.} which are combinations of \(k\) strands, ensuring that each strand must be either a pod or an arc which connects two undercrossings
(more precisely, arcs connecting the endpoints of pods to undercrossings are not allowed to be in a seed).
For each seed, it colors the strands in the seed and creates a truncated version of the graph by removing any pods that are not part of the selected seed. 

The program then attempts to maximally extend this truncated graph through two types of coloring moves: one that extends from the endpoint of a pod and another that extends from an undercrossing if its corresponding overstrand is colored. If the entire truncated graph can be colored, the seed is added to the list of \textit{colorable seeds}. Half of the sum of weights of strands in the seeds gives an upper bound for the Wirtinger number and an upper bound for $k$ (the maximum number of strands in a colorable seed).
The program keeps running until $k$ reaches its upper bound and returns the minimum Wirtinger number and the corresponding seeds.

\subsubsection*{Output}
We extracted and computed the Wirtinger numbers for over 10,000 $\Theta_4$-graph diagrams.

\subsection{Hara's graphs and modifications}
\begin{example}[Hara's graphs]
The family of almost unknotted $\Theta_{4}$-graphs given by Hara \cite[Fig.~2]{hara1991symmetry} all have unweighted bridge index $2$, and (weighted) bridge index at most $3$, which can be quickly verified by finding Wirtinger colorings of the diagrams (using the same seeds for both versions of bridge index). This is perhaps surprising, as these particular diagrams have many more local maxima and minima.
\end{example}

\begin{example}[Modifications of Hara's graphs]
We create two families of spatial graphs by modifying Hara's $\Theta_{4}$-graphs from the previous example; however these families are \textit{not} almost unknotted. For both families we give upper bounds for their bridge indices, which can be verified by finding corresponding Wirtinger colorings. In both cases, we conjecture that this upper bound is in fact the bridge index. 

For the sake of simplicity, we depict both families as coming from $\Theta$-graphs rather than $\Theta_{4}$-graphs, but both could be further extended by adding more edges in analogous ways. This would likely not change the bridge index. Note that with this change, we can (perhaps more appropriately) think of these families as modifications of a Kinoshita-Wolcott graph \cite{kinoshita1958alexander,kinoshita1972elementary,wolcott1987knotting} (see \cite[Fig.~1]{jang2016new}).

First we alter Hara's graphs by replacing the braids on two strands with braids on $n$ strands as in \autoref{fig:ex_br_par} (left). In the specific instance shown in \autoref{fig:ex_br_par} (left) the constituent knots are all $5_2$.

\begin{fact}
    Let $G_n$ be the $\Theta$-graph obtained from Hara's graphs by replacing the braids on two strands with braids on $n$ strands. Then the unweighted bridge index $\widehat{\beta}(G_n)$ is less than or equal to $n$, and the bridge index $\beta(G_n)$ is less than or equal to $\tfrac{2n+1}{2}$.
\end{fact}

Second we alter Hara's graphs by taking $n$ parallel copies of each edge as in \autoref{fig:ex_br_par} (right). For the instance shown in \autoref{fig:ex_br_par} (right) every pair of edges forms an unknot, but not every proper subgraph is trivial: when one edge is deleted, the remaining parallel edge still causes a problem. 

\begin{fact}
    Let $G_n$ be the $\Theta_{3n}$-graph obtained from Hara's graphs by taking $n$ parallel copies of each edge.
    Then the unweighted bridge index $\widehat{\beta}(G_n)$ is less than or equal to $n+1$, and the bridge index $\beta(G_n)$ is less than or equal to $\tfrac{5n}{2}$.
\end{fact}

\begin{figure}[t!]
\includegraphics[width=10cm]{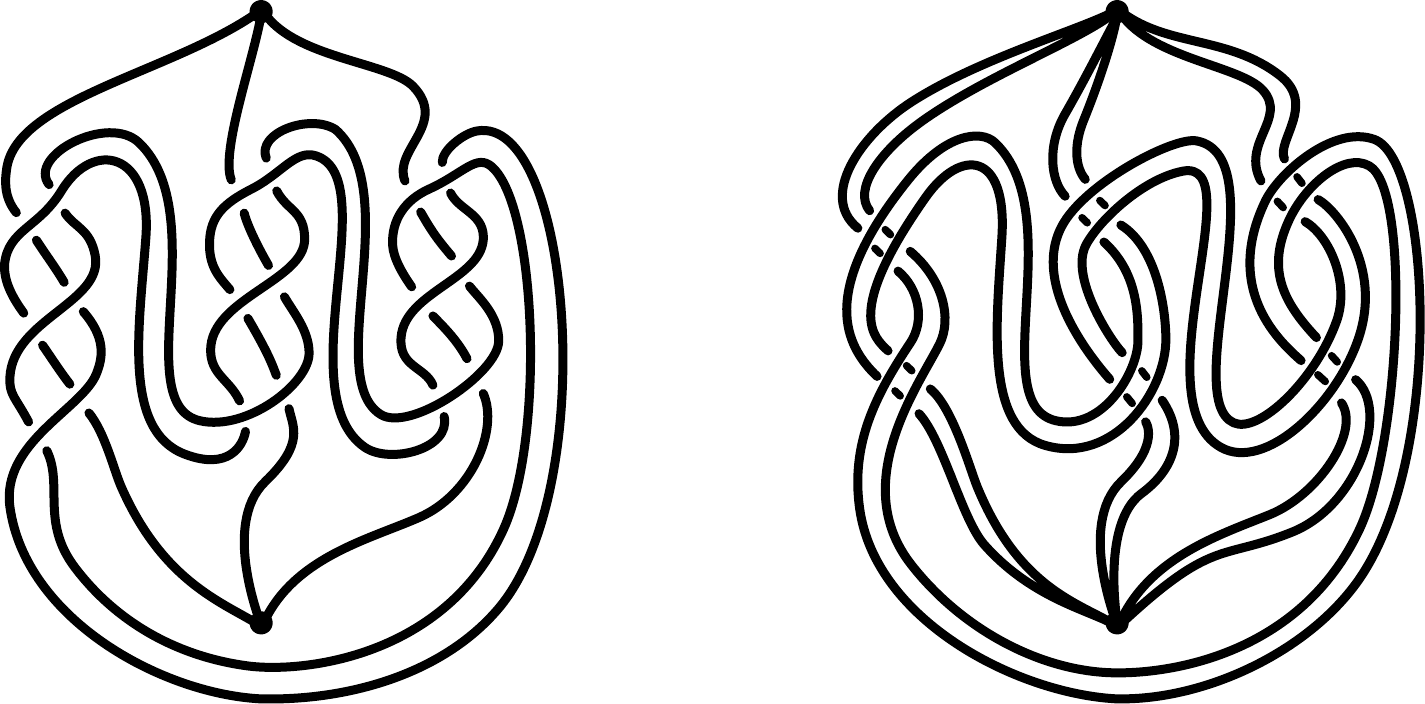}
\centering
\caption{Some spatial graphs constructed by modifying Hara's graphs. On the left we modify by changing the number of strands in the twist regions, and on the right we modify by adding parallel edges. \label{fig:ex_br_par}}
\end{figure}
\end{example}

\subsection{Families from vertex sums} 

\begin{example}[Vertex sums of Suzuki's graph] \label{ex:Suzuki}
Take $n$ copies of the almost unknotted Suzuki $\Theta_4$-graph \cite{suzuki1984almost}, as shown in \autoref{fig:vertexred}. Consider the quandle $(\{0,1,2\},\triangleright)$, where $x\triangleright y=2y-x \pmod 3$. It can be shown that this quandle is homogeneous (see \cite{furuki2024homogeneous}, for example). 
In \autoref{fig:vertexred} we give three consistent colorings of the graph by this quandle; then by permuting the colors, we have a total of nine possible colorings. 

Taking the vertex sum of $n$ copies of the graph, we obtain a diagram with bridge index $n+1$. Using \autoref{prop:homogeneous}, we get that the number of colorings of the resulting vertex sum is $3^{n+1}$. Then by \autoref{prop:color}, the minimum number of seeds is precisely $n+1$, where in this case each trivial tangle contains a vertex. Thus we have shown the following.

\begin{corollary} \label{cor:Suzuki1}
   Let $G_n$ be the $\Theta_4$-graph obtained by taking the vertex sum of $n$ copies of Suzuki's graph. Then the unweighted bridge index $\widehat{\beta}(G_n)$ is $n+1$.
\end{corollary}

With generalized 3-coloring (recall \autoref{def:gen} and \autoref{prop:gen}), we can give a lower bound for the (weighted) bridge index $\beta$, not just the unweighted version $\widehat{\beta}$. The other type of bridge splitting that we have to consider is the case where two vertices of the $\Theta_4$-curve are on the same side. That is, one trivial tangle is a collection of intervals with no vertices.

\begin{figure}[ht!]
\labellist
\small\hair 2pt

\pinlabel $x$ at 90 189
\pinlabel $y$ at -2 150
\pinlabel $z$ at 62 125
\pinlabel $t$ at 83 125
\pinlabel $w$ at 90 150

\pinlabel $f$ at 30 105
\pinlabel $a$ at 13 50
\pinlabel $b$ at 64 50
\pinlabel $c$ at 79 30
\pinlabel $d$ at 78 6
\pinlabel $g$ at 67 81
\pinlabel $h$ at 96 37
\endlabellist
\centering
\includegraphics[width=10cm]{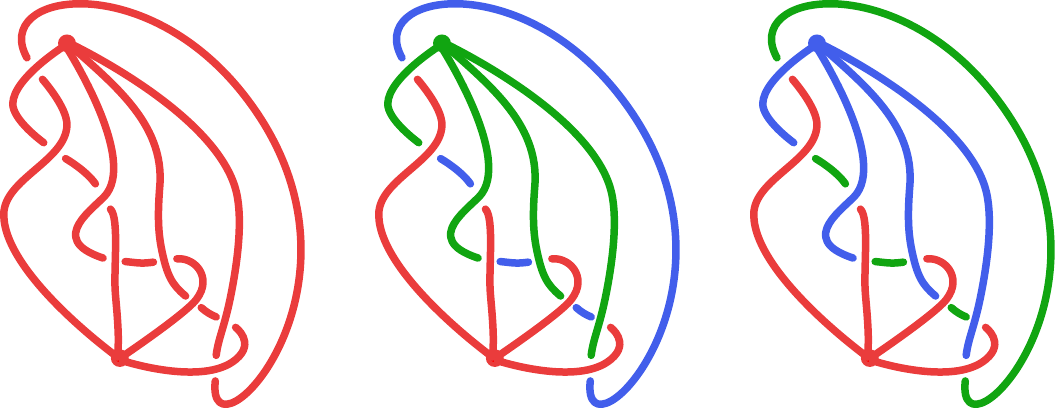}
\caption{For a Suzuki $\Theta_4$-graph, we see that the number of colorings where the arcs next to a particular vertex are labeled by a fixed element of the dihedral quandle of order three is three. (Here we use colors to represent the labelings by the quandle.) Thus by permuting the colors, we have a total of nine colorings. The labels in the leftmost diagram are used in the proof of \autoref{prop:newcolor}. \label{fig:vertexred}}
\end{figure}

\begin{proposition}
    The number of generalized $3$-colorings of the Suzuki $\Theta_4$ graph is $81=3^4$. \label{prop:newcolor}
\end{proposition}

\begin{proof}
    We get the following chains of equations, referencing the labels in the leftmost diagram of \autoref{fig:vertexred}.
\begin{align*}
y-z+t-w&=0 \pmod 3 & a-b+c-d &= 0 \pmod 3 \\
2y-x&=a \pmod 3 & 2a-y &= f \pmod 3 \\
2z-f &= b \pmod 3 & 2b-z&=g \pmod 3 \\
2t-g&= c \pmod 3 & 2c-t &= h \pmod 3 \\ 
2w-h &= d \pmod 3 & 2d-w &= x \pmod 3
\end{align*}
    These equations can be transformed to the following.
\begin{align*}
    a &= 2y - x\pmod{3} & b &= 2z - x\pmod{3} \\
    c &= 2t - x\pmod{3} & d &= 2w - x\pmod{3} \\
    f &= g = h = x 
\end{align*}
    In other words, any element of $\{0,1,2\}$ is a valid choice for $x,y,z,$ and $t$, giving $3^4$ total number of solutions. 
\end{proof}

This means that any bridge position in which one trivial tangle is made only of intervals requires at least four arcs in the trivial tangle.

\begin{corollary} \label{cor:Suzuki2}
   Let $G_n$ be the $\Theta_4$-graph obtained by taking the vertex sum of $n$ copies of Suzuki's graph. Then the bridge index $\beta(G_n)$ is $n+2$.
\end{corollary}

\begin{proof}
    By \autoref{cor:Suzuki1}, we have that the number of connected components in each tangle cannot be less than $n+1,$ but we have not ruled out the possibility that $\beta(G)= n+1.$ For that to happen, we must have that one of the trivial tangles is made only of intervals. However, \autoref{prop:newcolor} says that $G_1$ needs at least four arcs in a trivial tangle. Furthermore, when a vertex sum is performed, we can ensure that the colors at the strands where the sum happens have the same matching colors. For any fixed color at such a strand of one summand, there are three possible colored diagrams. Therefore, the number of generalized 3-coloring of $G_2=G_1\#_4 G_1$ is $3^4\cdot 3=3^5$. 
    Continuing with this logic, we conclude that the number of generalized 3-colorings of $G_n$ is $3^{n+3},$ implying that we need at least $n+3$ arcs of degree 2. So the (weighted) bridge index $\beta$ using only arcs in one trivial tangle is at least $n+3,$ which is strictly higher than bridge positions where vertices of the graphs are on different sides of the bridge splitting.
\end{proof}
\end{example}

\begin{example}[Vertex sums via clasp moves] \label{ex:vertex1} 
The clasp move \cite{simon1990minimally} (see \cite[Fig.~3]{jang2016new}), is a well-known move which preserves almost unknottedness of $\Theta$-graphs. We will use this to build a family of graphs as follows. First construct an almost unknotted $\Theta$-graph by performing the clasp move on a slightly isotoped trivial $\Theta$-graph as in \autoref{fig:ex_loop1} (left). Then let $G_n$ be the almost unknotted $\Theta$-graph obtained by repeating this procedure several times on the same graph, such that the clasps are ``stacked'' on top of each other, as indicated in \autoref{fig:ex_loop1} (right). Note that the resulting graph can also be obtained by taking the vertex sum of $n$ copies of the original graph. In this sense, we can view these vertex sums as coming from clasp moves.

We omit the proof of the following corollary, as it follows using either a very similar argument as for \autoref{cor:vertex}, or using the fact that this example can be seen as being obtained from vertex sums of the following, plus results in \cite{taylor2018additive} and \cite{taylor2021tunnel} about additivity of bridge index under vertex sum. 

\begin{corollary}
    Let $G_n$ be the almost unknotted $\Theta$-graph obtained by taking the vertex sum of $n$ copies of the graph obtained from the clasp move in \autoref{fig:ex_loop1} (left) on top of each other, as in \autoref{fig:ex_loop1} (right). 
    Then the unweighted bridge index $\widehat{\beta}(G_n)$ equals $2n+1$, and the bridge index $\beta(G_n)$ equals $\tfrac{4n+3}{2}$.
\end{corollary}

\begin{figure}[ht!]
\centering
\includegraphics[width=12cm]{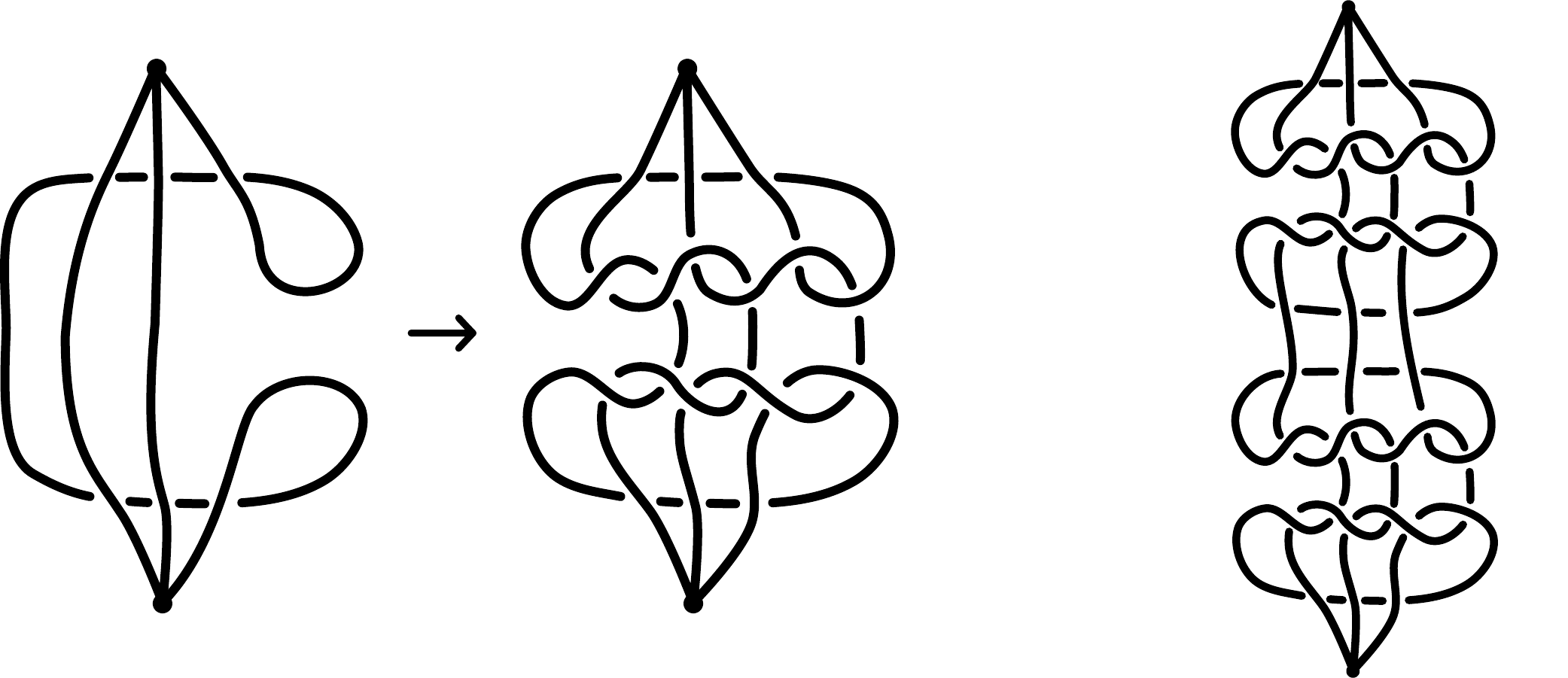}
\caption{Performing the clasp move to construct a new almost unknotted $\Theta$-graph (left). A $\Theta$-graph constructed by taking the vertex sum of copies of the graph obtained from the clasp move,  or equivalently, performing the clasp move multiple times (right). \label{fig:ex_loop1}}
\end{figure}

This family $G_n$ fits into a more general family of almost unknotted $\Theta_m$-graphs, constructed analogously, but we expect the bridge index to not change as $m$ increases. 
\end{example}

\begin{example}[More vertex sums via clasp moves] \label{ex:vertex2}
We can construct a somewhat simpler family of almost unknotted $\Theta$-graphs by performing ``half'' of the clasp move in the initial graph, as in \autoref{fig:ex_loop2} (left). Note that this graph fits into the family of Kinoshita-Wolcott graphs \cite{kinoshita1958alexander,kinoshita1972elementary,wolcott1987knotting} (see \cite[Fig.~1]{jang2016new}), and could be seen as being obtained from this perspective. Let $G_n$ be the family obtained from taking $n$ vertex sums of this graph, as indicated in \autoref{fig:ex_loop2} (right). Similarly, the graph obtained in \autoref{fig:ex_loop1} (left) could be seen as being obtained by taking the vertex sum of the Kinoshita graph and its mirror.

\begin{figure}[ht!]
\includegraphics[width=8cm]{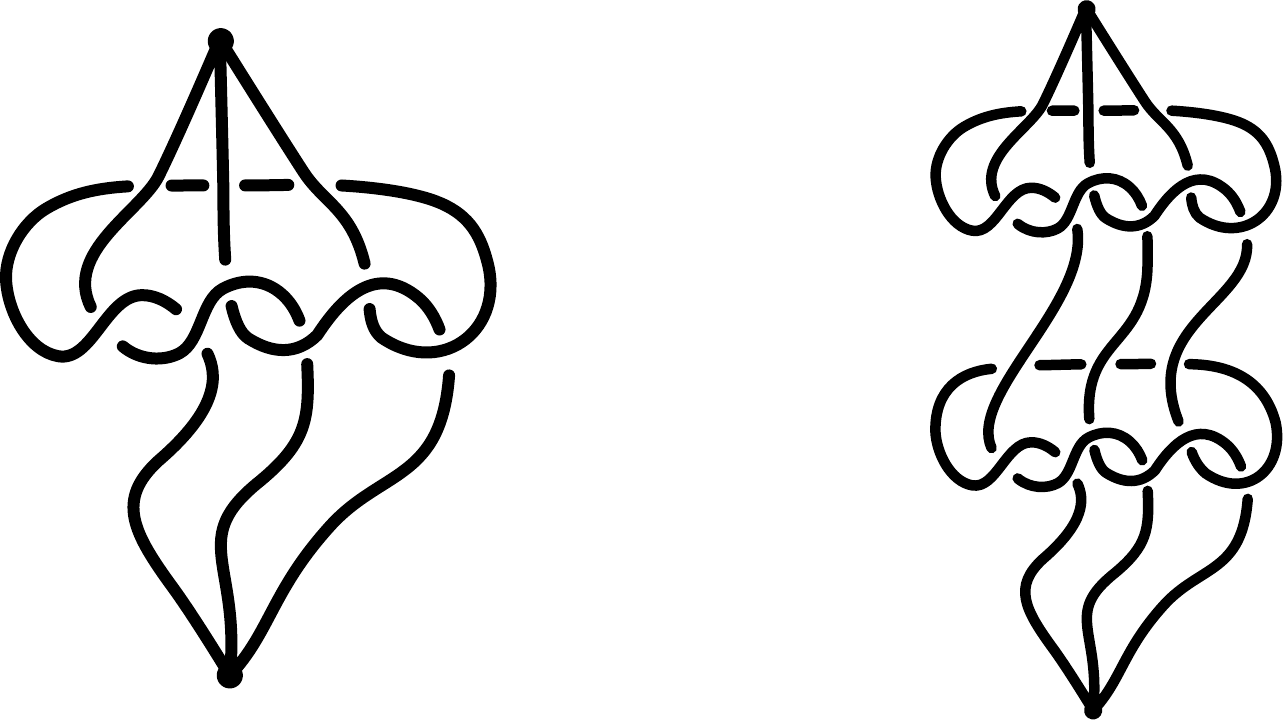}
\centering
\caption{The result of performing ``half'' of the clasp move to construct a new almost unknotted $\Theta$-graph (left). A $\Theta$-graph constructed by taking the vertex sum of copies of the graph on the left, or equivalently, performing the ``half'' clasp move multiple times (right). \label{fig:ex_loop2}}
\end{figure}

In \autoref{fig:ex_loop2color} we give four consistent labelings of $G_1$ by the homogeneous Alexander quandle of order four from \autoref{sec:quandle}. 
Observe that by permuting labels, the total number of colorings of $G_1$ is 16. By \autoref{prop:homogeneous}, the number of colorings of $G_n$ is $4^{n+1}$, and by a combination of the lower bound coming from \autoref{prop:color} and the upper bound coming from our algorithm, we see that the unweighted bridge index is precisely $n+1$.

\begin{figure}[ht!]
\labellist
\small\hair 2pt

\pinlabel $1$ at 90 10
\pinlabel $2$ at 25 144
\pinlabel $0$ at 70 144
\pinlabel $3$ at 189 215
\pinlabel $0$ at 89 225
\pinlabel $3$ at 129 225
\pinlabel $2$ at 189 255
\pinlabel $1$ at 80 290

\pinlabel $1$ at 360 10
\pinlabel $3$ at 295 144
\pinlabel $2$ at 340 144
\pinlabel $0$ at 459 215
\pinlabel $2$ at 359 225
\pinlabel $0$ at 399 225
\pinlabel $3$ at 459 255
\pinlabel $1$ at 350 290

\pinlabel $1$ at 630 10
\pinlabel $0$ at 565 144
\pinlabel $3$ at 610 144
\pinlabel $2$ at 729 215
\pinlabel $3$ at 629 225
\pinlabel $2$ at 669 225
\pinlabel $0$ at 729 255
\pinlabel $1$ at 620 290

\pinlabel $1$ at 900 10
\pinlabel $1$ at 835 144
\pinlabel $1$ at 880 144
\pinlabel $1$ at 999 215
\pinlabel $1$ at 899 225
\pinlabel $1$ at 939 225
\pinlabel $1$ at 999 255
\pinlabel $1$ at 890 290

\endlabellist
\includegraphics[width=13cm]{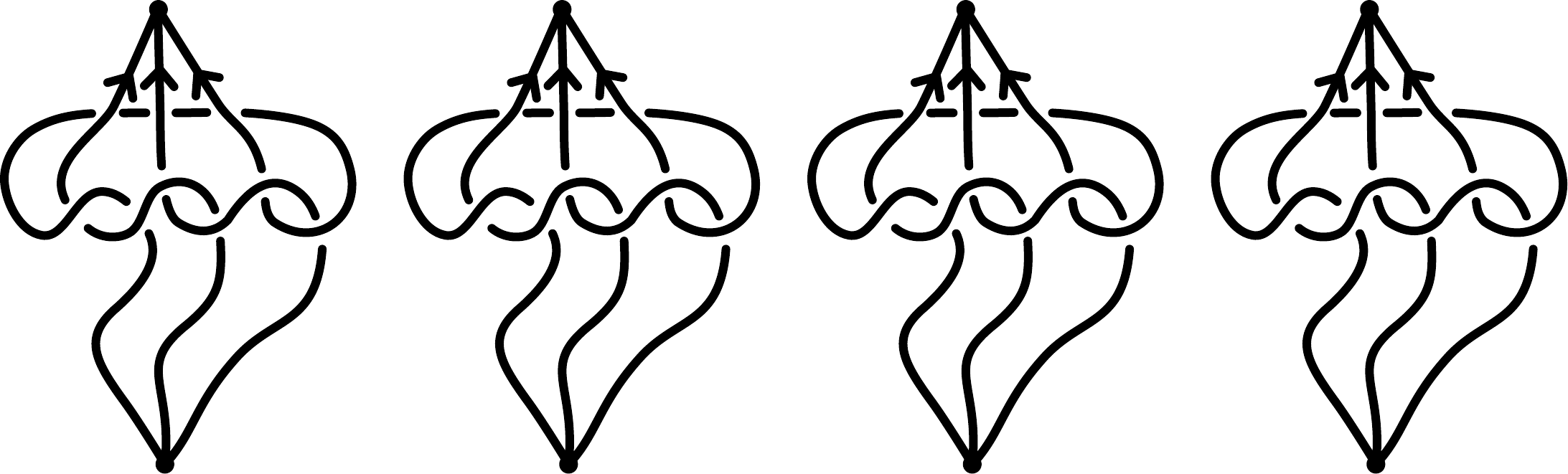}
\centering
\caption{Four consistent labelings of the graph in \autoref{fig:ex_loop2} (left) by the Alexander quandle of order four. Thus by permuting the labels, we have a total of sixteen colorings. \label{fig:ex_loop2color}}
\end{figure}

To determine the (weighted) bridge index, we consider possibilities for the structure of the trivial tangles that fit in a bridge splitting of a $\Theta$-graph with a prescribed number $k$ of unknotted tree components in the upper tangle: there are four cases (see \autoref{fig:poss} for $k=4$). 
Note that the configurations on the right column of \autoref{fig:poss} will never realize the bridge index, so we only need to consider the left column configurations. That is, the bridge index is either $\frac{2n+3}{2}$ or $\frac{2n+2}{2}$. However, the latter cannot happen because the lower trivial tangle is forced to have $n$ connected components, contradicting our lower bound of $n+1$ from the quandle coloring.

\begin{figure}[ht!]
\includegraphics[width=11cm]{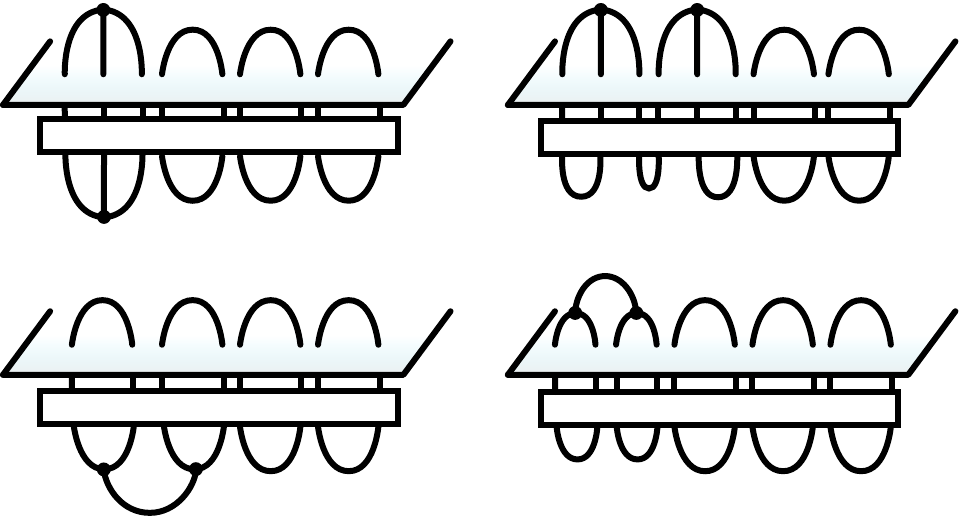}
\centering
\caption{Different possibilities for trivial tangles in a bridge splitting of a $\Theta$-graph, where the number of components for the tangle above is four. \label{fig:poss}}
\end{figure}

\begin{corollary} \label{cor:vertex}
    Let $G_n$ be the almost unknotted $\Theta$-graph obtained by taking the vertex sum of $n$ copies of the graph obtained from the clasp move in \autoref{fig:ex_loop2} (left) on top of each other, as in \autoref{fig:ex_loop2} (right). 
    Then the unweighted bridge index $\widehat{\beta}(G_n)$ equals $n+1$, and the bridge index $\beta(G_n)$ equals $\tfrac{2n+3}{2}$.
\end{corollary}
\end{example}

Our procedure for bounding the bridge index from below in this example yields the same lower bound previously achieved in \cite{taylor2021tunnel}. However, their bounds (in Theorems 7.4 and 7.5) include hypotheses such that their results can only be applied to the present example (and by association, \autoref{ex:Kinoshita} and \autoref{ex:vertex1}) in this paper.

\subsection{Eulerian spatial graphs from links}
These examples are related to constructions in \cite{flapan2017ravels}, where the authors were interested in nontriviality of the spatial graphs, but here we pay attention to the bridge index.

\begin{example}[Bouquet graphs]
First we consider the family of almost unknotted bouquet graphs $G_n$ shown in \autoref{fig:mont}. Using the terminology in \cite{flapan2017ravels}, the graph $G_n$ is a vertex sum of a Montesinos tangle, where no rational subtangle has the $\infty$ parity. This means that the arc in the diagram connected to the northwest endpoint of each tangle has the other endpoint in the southeast corner. By Theorem 3.3 of \cite{flapan2017ravels}, the spatial graph is ravel, which means that the spatial graph itself is nontrivial but every cycle is unknotted. Since a bouquet graph has the two cycles as proper subgraphs, being ravel is the same as being almost unknotted.

We claim that the bridge index of $G_n$ is at most $n+1$. The standard diagram of a Montesinos knot containing $n$ rational tangles has $n$ local maxima. To form a bouquet graph, we perform a vertex sum, which fuses a local maximum with a local minimum (see \autoref{fig:fuse}). This gives an embedding with $n-1$ local maxima and a vertex of degree four on one side of the bridge sphere. On the other side of the bridge sphere, there are $n+1$ local minima. Thus, the number of intersections of $G_n$ with the bridge sphere is $2(n+1).$ Dividing by $2$ gives the claim.

To show that the bridge index is at least $n+1$, we will use \autoref{prop:reflection}. The length $n$ Montesinos links admits a rank $n$ Coxeter quotient \cite{blair2024coxeter}. Performing a vertex closure as indicated in the figure still yields a rank $n$ Coxeter quotient since the strands  where the closure takes place are sent to the same reflection. Therefore, each bridge splitting must have $n$ components in one trivial tangle. Since one component has weight 4 and the other $n-1$ components have weight $2$, the bridge index is $\frac{2(n-1)+4}{2} = n+1.$

\begin{corollary}
    The bouquet graph $G_n$ in \autoref{fig:mont} is almost unknotted and has bridge index $n+1.$
\end{corollary}

\begin{figure}[t!]
\includegraphics[width=9cm]{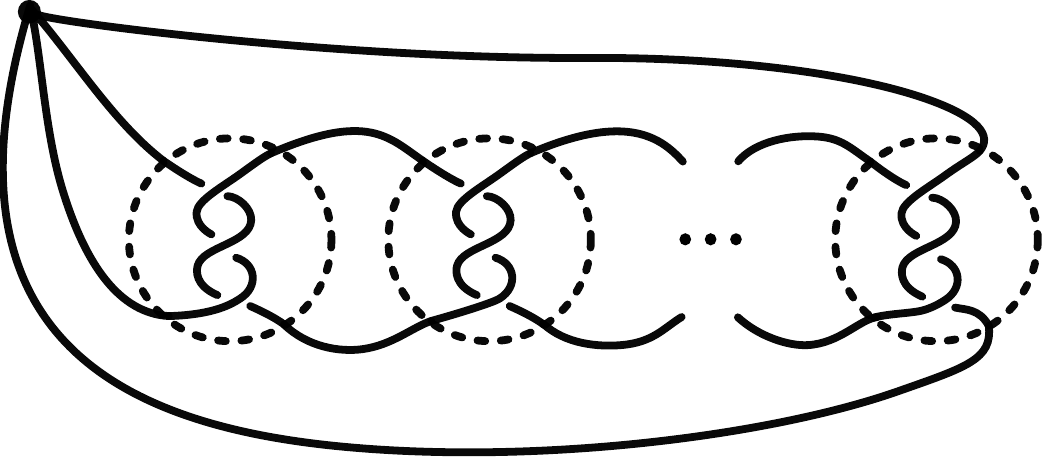}
\centering
\caption{A family of almost unknotted bouquet graphs with $n$ rational subtangles drawn as dashed circles. \label{fig:mont}}
\end{figure}
\end{example}

\begin{example}[More clasping] \label{ex:clasp}
We construct two spatial graphs by performing ``clasp-like'' moves in a different sense than \autoref{ex:vertex1}.

First, consider the two-component spatial graph shown in \autoref{fig:ex_clasp1} (left). Note that this graph is almost unknotted and has unweighted bridge index $3$ (which we will verify in \autoref{ex:Montesinos}). Further note that increasing the number of twists in various places, as long as each twist region remains an odd number of half-twists, will not change the linkedness, almost unknottedness, or bridge index of the graph.

Second, consider the $\Theta_4$-graph shown in \autoref{fig:ex_clasp1} (right). Note that this graph is \textit{not} almost unknotted, but it also has unweighted bridge index $3$. Further note that increasing the number of twists in various places, as long as the parity of the number of half-twists of each twist region is preserved, will not change the bridge index of the graph.

For both graphs, if we create a family of graphs from them by adding further pairs of edges that are clasped together in the same way as the two pairs in the given graphs, as shown in \autoref{fig:ex_clasp2} for \autoref{fig:ex_clasp1} (left), the upper bound for the bridge index coming from our algorithm increases accordingly, and we conjecture that this upper bound is in fact the bridge index. Note however that the almost unknottedness of the spatial graph in \autoref{fig:ex_clasp1} (left) will be destroyed once more edges are added.

\begin{fact} 
    Let $G_n$ be the $\Theta_{2n}$-graph obtained by increasing the number of edges of either of the graphs in \autoref{fig:ex_clasp1} as described above. 
    Then the unweighted bridge index $\widehat{\beta}(G_n)$ is less than or equal to $n+1$, and the bridge index $\beta(G_n)$ is less than or equal to $2n$.
\end{fact}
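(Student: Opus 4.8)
The plan is to exhibit, for each $n$, a diagram of $G_n$ that is $(n+1)$-Wirtinger colorable; since \autoref{thm:main} gives $\beta(G_n)=\omega(G_n)$, this immediately yields $\beta(G_n)\le n+1$. Concretely I would take the diagram of $G_n$ obtained by stacking $n$ clasped pairs of edges between the two vertices $u$ and $v$, extending the pictures in \autoref{fig:ex_clasp} in the manner described in \autoref{ex:clasp}. Because the local structure near each clasp is identical in the two base graphs (the almost unknotted two-component graph and the $\Theta_4$-graph), a single coloring argument should handle both families at once.

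For the seeds I would choose the following $n+1$ strands. First, color the pod at $u$; as a pod this counts as a single seed and assigns one common color to the $2n$ strands emanating from $u$. Then, inside each of the $n$ clasps, select one additional strand as a seed, for a total of $1+n=n+1$ colors. The pod seed starts a cascade: every strand meeting $u$ is colored at the outset, and the remaining work is purely local to the clasps. Within a clasp the single internal seed is placed precisely so as to break the circular dependency that would otherwise obstruct the coloring move — that is, it supplies the one overstrand whose color is needed before any understrand in that clasp can be completed. Once that strand is colored, the coloring move of \autoref{fig:moves} fills in the rest of the clasp one crossing at a time, and the resulting colors are carried along the edges toward $v$. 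The pod at $v$ need not be seeded; its incident strands are colored last, consistently with the convention in the proof of \autoref{prop:wirtatmostbr} that lower pods are colored last.

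I would verify the propagation by processing the clasps in the order in which they are stacked and, within each clasp, ordering the coloring moves by the heights of the strands as in \autoref{prop:wirtatmostbr}, so that at the moment each move is applied the relevant overstrand has already received a color. The delicate part — and the main obstacle — is the bookkeeping at the crossings where one clasp feeds into the next and at the strands incident to the two vertices: one must confirm that this ordering never stalls, i.e.\ that no strand remains uncolored for want of a colored overstrand. Specializing to $n=2$ recovers the bound $\beta\le 3$ for the two base graphs of \autoref{fig:ex_clasp}, consistent with their stated bridge index $3$. Since the statement asserts only the inequality $\beta(G_n)\le n+1$, exhibiting this one valid $(n+1)$-coloring suffices; we need not show that $n+1$ seeds are necessary.
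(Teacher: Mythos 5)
Your overall route is the same as the paper's: exhibit an $(n+1)$-Wirtinger colorable diagram of $G_n$ and convert that into $\beta(G_n)\le n+1$ via \autoref{thm:main}, noting as you do that only the upper bound is claimed. Your seed selection — the pod at one vertex (counted as a single seed) plus one strand per clasp, with the lower pod colored last — also matches both the paper's algorithmic convention (its seed combinations always contain a pod) and the correspondence, implicit in the proof of \autoref{thm:main}, between seeds and the components of the upper tangle: one pod plus one bridge arc for the maximum that each clasped pair of edges forces. The genuine difference is in how colorability is certified. The paper never argues it by hand: the Fact is backed by running the Python implementation on these diagrams (``the upper bound \ldots coming from our algorithm increases accordingly''), so the per-diagram check is delegated to the computer. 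You instead sketch a hand verification, but you explicitly leave its key step — that the height-ordered propagation ``never stalls'' — unverified, and that step is precisely where all the content of the Fact lives, so as written your proposal is a plan rather than a proof. The good news is that it can be closed cheaply and uniformly in $n$: since these diagrams arise by fusing extrema of pretzel/Montesinos links, they are already essentially in bridge position, so after seeding the pod and the strand through each clasp's maximum, every remaining strand sits in a monotone, braid-like region, and the block-by-block argument of \autoref{prop:wirtatmostbr} (see \autoref{fig:bleed}) colors one new understrand per crossing until only the lower pod remains. Because the local picture at a clasp and at an odd twist region is the same for every $n$ and for both base graphs in \autoref{fig:ex_clasp}, doing this bookkeeping once gives a clean all-$n$ argument — in that respect your approach, completed, would actually improve on the paper's instance-by-instance computational justification.
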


\begin{figure}[t!]
\includegraphics[width=9cm]{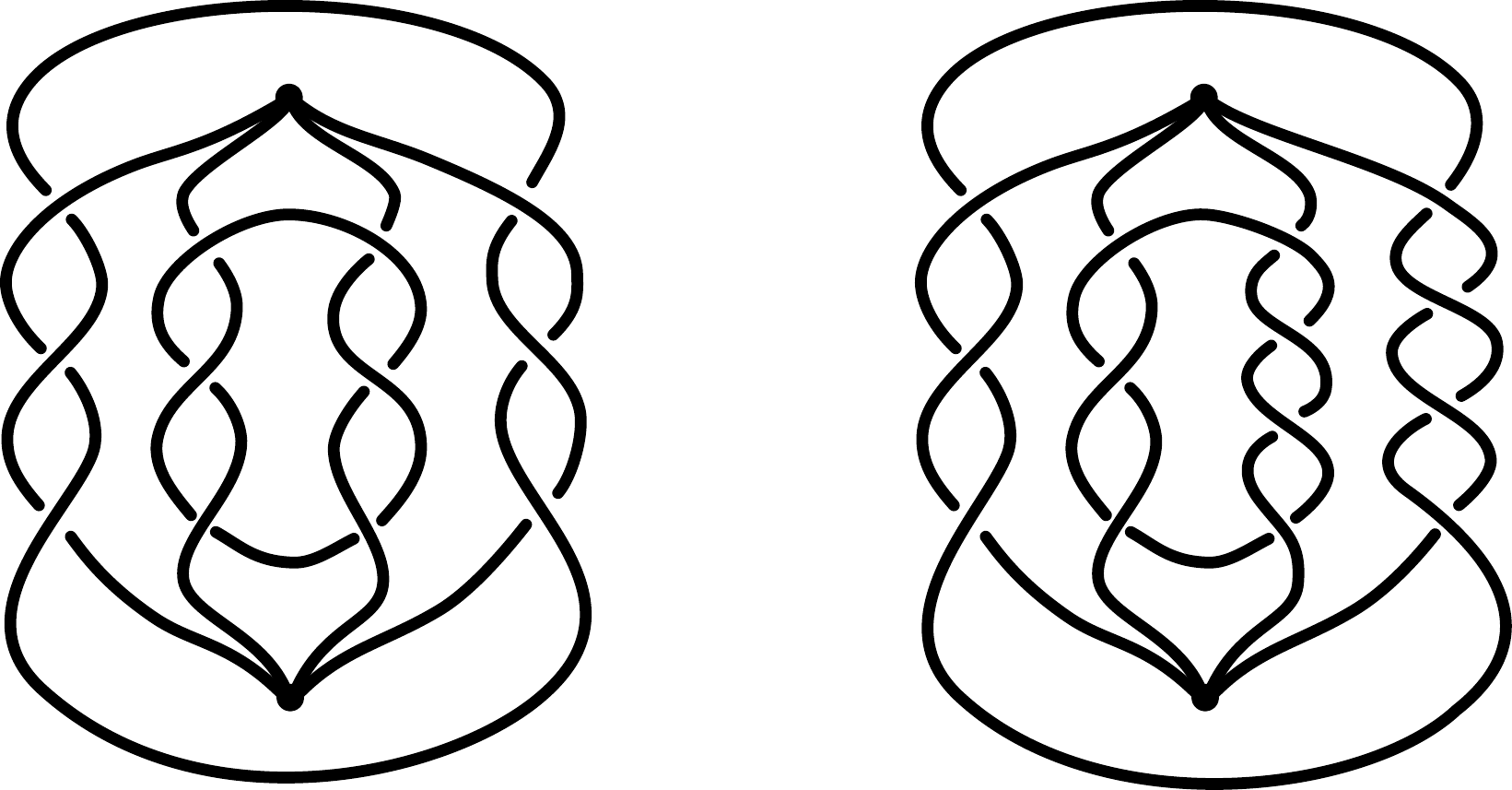}
\centering
\caption{Two more spatial graphs constructed from ``clasp-like'' moves. \label{fig:ex_clasp1}}
\end{figure}

\begin{figure}[t!]
\includegraphics[width=6cm]{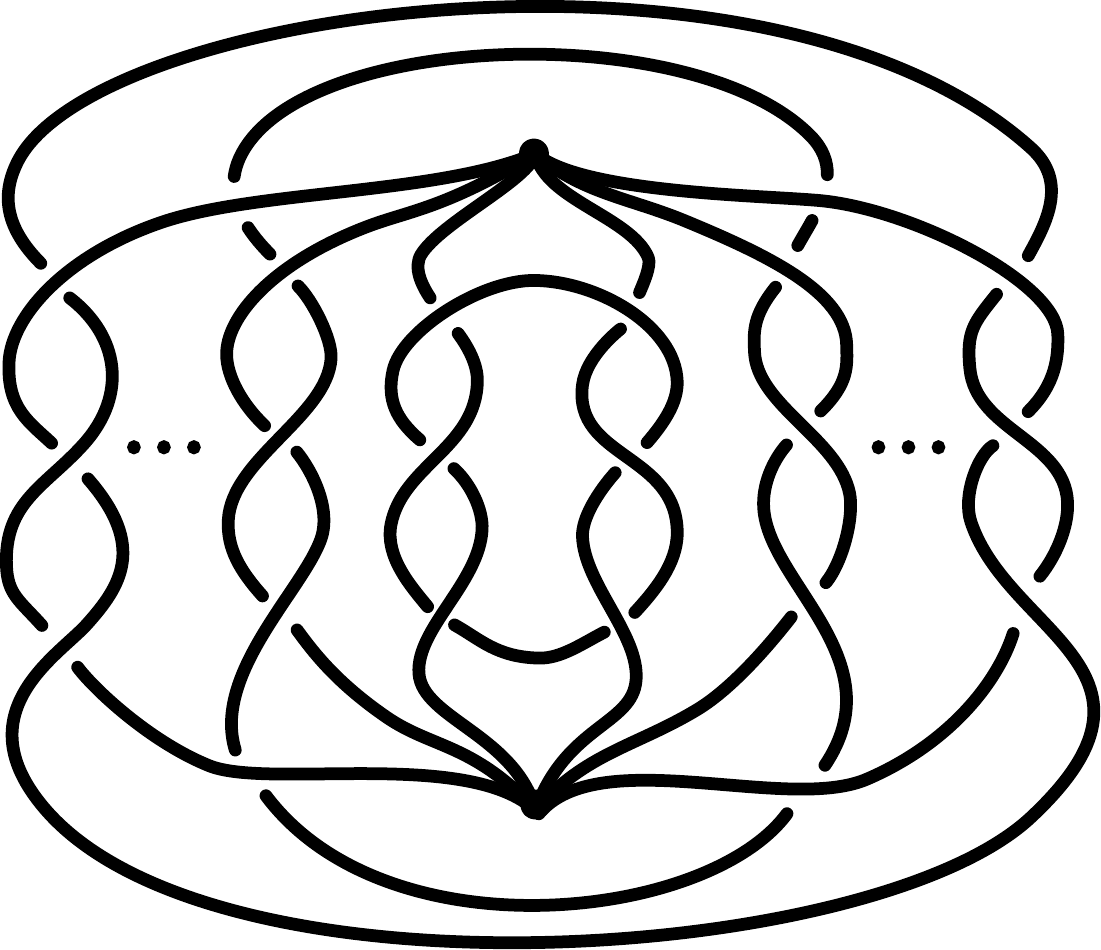}
\centering
\caption{A family of graphs created from \autoref{fig:ex_clasp1} (left) by adding further pairs of clasped edges. \label{fig:ex_clasp2}}
\end{figure}
\end{example}

Another way of viewing the construction in \autoref{ex:clasp} is by fusing pairs of maxima and minima of pretzel links, which leads us to a more general construction. Recall that a graph is \textit{Eulerian} if every vertex has even degree. There are numerous collections of knots and links that have an arbitrarily large number of quandle colorings \cite{przytycki20063} or admit Coxeter quotients with arbitrarily large reflection rank \cite{blair2024coxeter}. Therefore, the knots in these collections have arbitrarily large bridge index. Taking a knot and fusing pairs of local maxima or pairs of local minima together creates an Eulerian graph with degree four vertices (see \autoref{fig:fuse}). In various cases, we can obtain an almost unknotted graph this way. Furthermore, we inherit some lower bounds from the links. 

\begin{figure}[ht!]
\centering
\includegraphics[width=8cm]{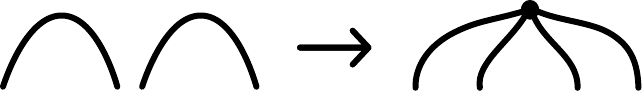}
\caption{Fusing two extrema to form a vertex of degree four. \label{fig:fuse}}
\end{figure}

Recall from \autoref{sec:rank} that we can use labelings by a Coxeter group to obtain lower bounds for bridge indices. Following \cite{blair2024coxeter}, we call a surjective group homomorphism  $\rho \colon \pi_1(S^3\backslash L)\twoheadrightarrow C$ from the fundamental group of a link to a Coxeter group $C$ a \textit{good quotient} if meridians are mapped to reflections. 

\begin{example}[Graphs from Montesinos links] \label{ex:Montesinos}
Consider the left picture of \autoref{fig:infinite}. Once rational tangles are added in the circles, this is a Montesinos link, and the authors of \cite{baader2021coxeter} have shown that it admits a good quotient. In particular, the meridional rank and the bridge index is $n$, where $n$ is the number of rational tangles inserted. Each maximum is associated to a reflection. The right of \autoref{fig:infinite} gives a bridge position of the spatial graph of interest with $n-1$ maxima, so the unweighted bridge index is at most $n-1$. Suppose that we have another bridge position for the graph with $b$ maxima. We can replace the vertices with the trivial $2$-tangles that recover the original Montesinos link. This is then a bridge position for the link with $b+1$ maxima. By \cite{baader2021coxeter}, we have $b+1\geq n,$ so $b\geq n-1$, the unweighted bridge index is equal to $n-1$, and the bridge index is equal to $n$.

Thus we can construct infinitely many graphs for which our technique can produce the exact
bridge index, where in this case, the graphs are not just coming from a family of vertex sums
as in previous examples. Of course, some choices will not give an almost unknotted graph,
but some choices, such as \autoref{fig:ex_clasp1} (left), will.

\begin{corollary} \label{cor:Montesinos}
    Let $G_n$ be the spatial graph obtained from fusing some of the maxima and minima of a Montesinos link with $n$ rational tangles, as depicted in \autoref{fig:infinite}. 
    Then the unweighted bridge index $\widehat{\beta}(G_n)$ equals $n-1$, and the bridge index $\beta(G_n)$ equals $n.$
\end{corollary}
    
\begin{figure}[ht!]
\centering
\includegraphics[width=13cm]{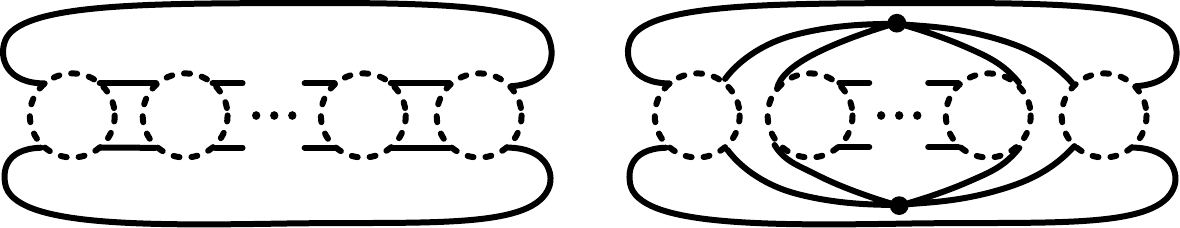}
\caption{A schematic to construct infinitely many graphs for which our technique can produce the exact bridge index. The dashed circles represent different rational tangles. \label{fig:infinite}}
\end{figure}
\end{example}

We now prove a general result on lower bounds of bridge index from repeatedly replacing vertices with rational tangles. In particular, we can use this to recover \autoref{cor:Montesinos}, but it is also useful in other situations.

\begin{proposition}\label{prop:fusegroup}
Suppose that $v$ is a vertex of a spatial graph $G$ of even degree $d$. If we replace $v$ with a $\tfrac{d}{2}$-string trivial tangle to get a spatial graph $G'$ with one fewer vertex, then there is a surjection from the fundamental group of the exterior of $G$ to that of $G'$ taking meridians to
meridians.
\end{proposition}

\begin{proof}
The boundary of the exterior of $G$ is a surface $F$. The tangle replacement that replaces a small neighborhood of a vertex with a $\tfrac{d}{2}$-trivial tangle can be thought of as attaching $\tfrac{d}{2}-1$ three-dimensional 2-handles with attaching regions on $F$; see \autoref{fig:2handles}. By Van Kampen's theorem, 2-handle additions only add relations to the fundamental group. In other words, the fundamental group of the exterior of $G'$ is a quotient of the group for $G$.
\end{proof}

\begin{figure}[ht!]
\centering
\includegraphics[width=8.5cm]{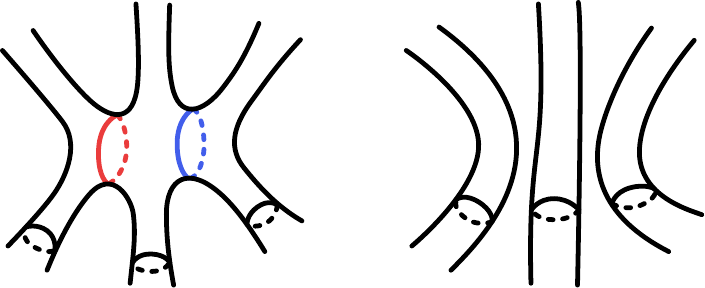}
\caption{Replacing a vertex with rational tangles has the effect of attaching some number of $2$-handles. In this particular figure, the vertex has degree six, and we attach $2$-handles along the red and blue curves to get a $3$-string tangle. \label{fig:2handles}}
\end{figure}

\subsection{Arbitrarily large bridge index} \label{sec:large}

The goal of this subsection is to prove \autoref{thm:index}. We begin with a series of lemmas. 

\begin{lemma}
    A diagram of a bouquet graph of $n$ petals with Wirtinger number $n$ coming from one seed is planar.
\end{lemma}

\begin{proof}
    The unique vertex $v$ of the graph has $2n$ edges emerging from it. If one seed extends to the entire diagram, then there is an embedding where one trivial tangle is a connected tree containing $v$. The other trivial tangle is made up of $n$ arcs containing local minima. A diagram representing this graph may contain crossings as we move away from $v$ traversing along edges. However, each crossing can be removed by the Reidemeister move (see move R6 from \cite{khandhawit2024planar}, for example) that allows twisting near vertices until no crossings are left.
\end{proof}

As mentioned in \cite{jang2016new,taylor2021abstractly}, it is not understood in general how clasping moves affect spatial graphs besides preserving almost unknottedness. In this paper, we show that by picking the clasping carefully in such a way that the quandle colorings are compatible, we can drive up the bridge index to be arbitrarily large while still being almost unknotted. This method can be modified to be compatible for colorings for even more general algebraic structures such as biquandles, but we just focus on the dihedral quandle for now. 

Recall that a knot or spatial graph which admits a nontrivial quandle coloring by the dihedral quandle of order three $R_3$ is called \textit{tricolorable}. For convenience, by a \textit{Type I} replacement, we mean the replacement $T\mapsto T'$ on three strands with the same color as shown on the right in \autoref{fig:organizedclasp}. By a \textit{Type II} replacement, we mean the replacement $T\mapsto T'$ where the colorings at $\partial T'$ are as in \autoref{fig:works} (the colors are interchangeable, but the point is that the upper right and lower right endpoints of the tangle get one color and the remaining endpoints get another color).

\begin{remark}
    The conditions on the dihedral colorings at $\partial T'$ are important. The reader can verify that for some other choice of colorings at $\partial T'$, the number of colorings is strictly less than claimed in the next lemma.
\end{remark}

\begin{figure}[ht!]
\includegraphics[width=8.5cm]{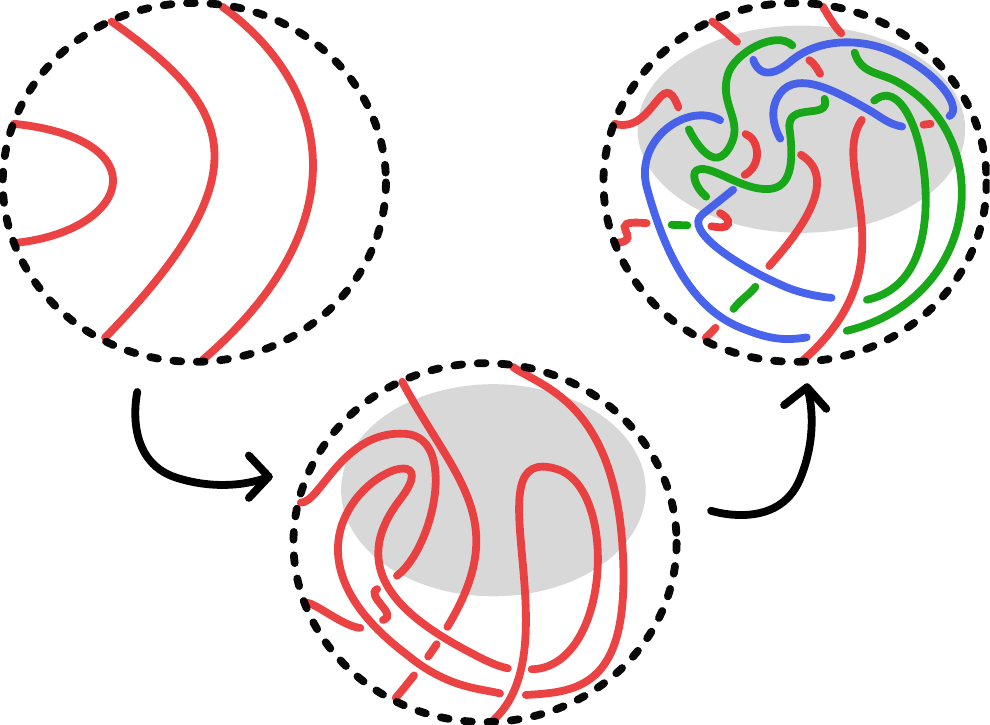}
\centering
\caption{A clasping move that increases the bridge index by one. \label{fig:organizedclasp}}
\end{figure}

\begin{figure}[ht!]
\includegraphics[width=11cm]{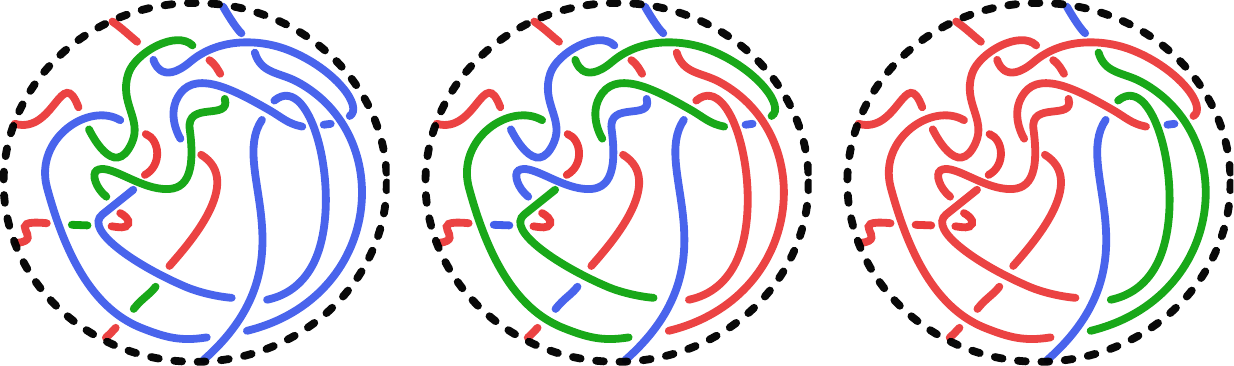}
\centering
\caption{Three tricolorings of $T'$ where there are two colors at the tangle boundaries. \label{fig:works}}
\end{figure}

\begin{lemma} \label{lem:colorincreases}
    Suppose $D$ is a diagram of a tricolorable almost unknotted graph with the number of colorings $Col_{R_3}(D)$. After one instance of the tangle replacement of Type I or Type II, we have that $D'$ is still almost unknotted and $Col_{R_3}(D')=3\cdot Col_{R_3}(D)$.
\end{lemma}

\begin{proof}
    To see that $D'$ represents an almost unknotted graph, we view the tangle replacement in two stages as shown in \autoref{fig:organizedclasp}. The first stage comprises entirely of Reidemeister moves which do not change the spatial graph type. The second stage is the clasping move, which is known to preserve almost unknottedness.

    The colorings shown in \autoref{fig:organizedclasp} and \autoref{fig:works} also demonstrate the claim on the formula for the quandle coloring number of $D'$. To be more precise, we name all the tricolored diagrams of $D$ as $D_1,D_2,\ldots, D_{Col_{R_3}(D)}$. After the replacement, we see that each colored diagram $D_j$ corresponds to 3 colorings of $D'$. 
    
    Namely, for the Type I replacement, the three colorings are the version where all strands of $T'$ are colored the same and two additional non-monochromatic colors for $T'$ (in \autoref{fig:organizedclasp}, notice that green and blue can be swapped). For the Type II replacement, the three colorings for each $D_j$ are shown in \autoref{fig:works}. In conclusion, the total number of colorings of $D'$ is $3\cdot Col_{R_3}(D)$ as claimed.
\end{proof}

The next lemma is straightforward. We use the same notation for the tangle replacement $T\mapsto T'$. In \autoref{fig:claspwirtseq}, the opaque gray oval is shown to mark the spot where tangle replacements can be performed to drive up the bridge index.

\begin{lemma}
      Suppose that $D\backslash T=D\backslash T'$ can be partially colored with $k$ seeds so that four strands which intersect $\partial T$ as shown in \autoref{fig:claspwirtseq} (marked with $x$'s) receive colors. After one instance of the tangle replacement in \autoref{fig:organizedclasp} to get a diagram $D',$ we have that $D'$ is still almost unknotted and has $k+1$ seeds, where the new seed has weight $2$.\label{lem:wirtseq}
\end{lemma}

\begin{proof}
    Maximally extend the $k$ seeds for $D\backslash T'$. By assumption, the strands that intersect $\partial T=\partial T'$ labeled with the letter $x$ are colored by the end of this process. Parts of these strands are contained in $T',$ but the current choice of seeds is not enough to color the entire $T'.$ An additional seed of weight $2$ (colored blue in \autoref{fig:claspwirtseq}) can be used to complete the coloring to the entire tangle $T'$.
\end{proof}

\begin{figure}[ht!]
\labellist
\small\hair 2pt
\pinlabel  $x$ at 42 385
\pinlabel $x$ at 93 375
\pinlabel $x$ at -6 438
\pinlabel $x$ at -7 497
\endlabellist
\includegraphics[width=11cm]{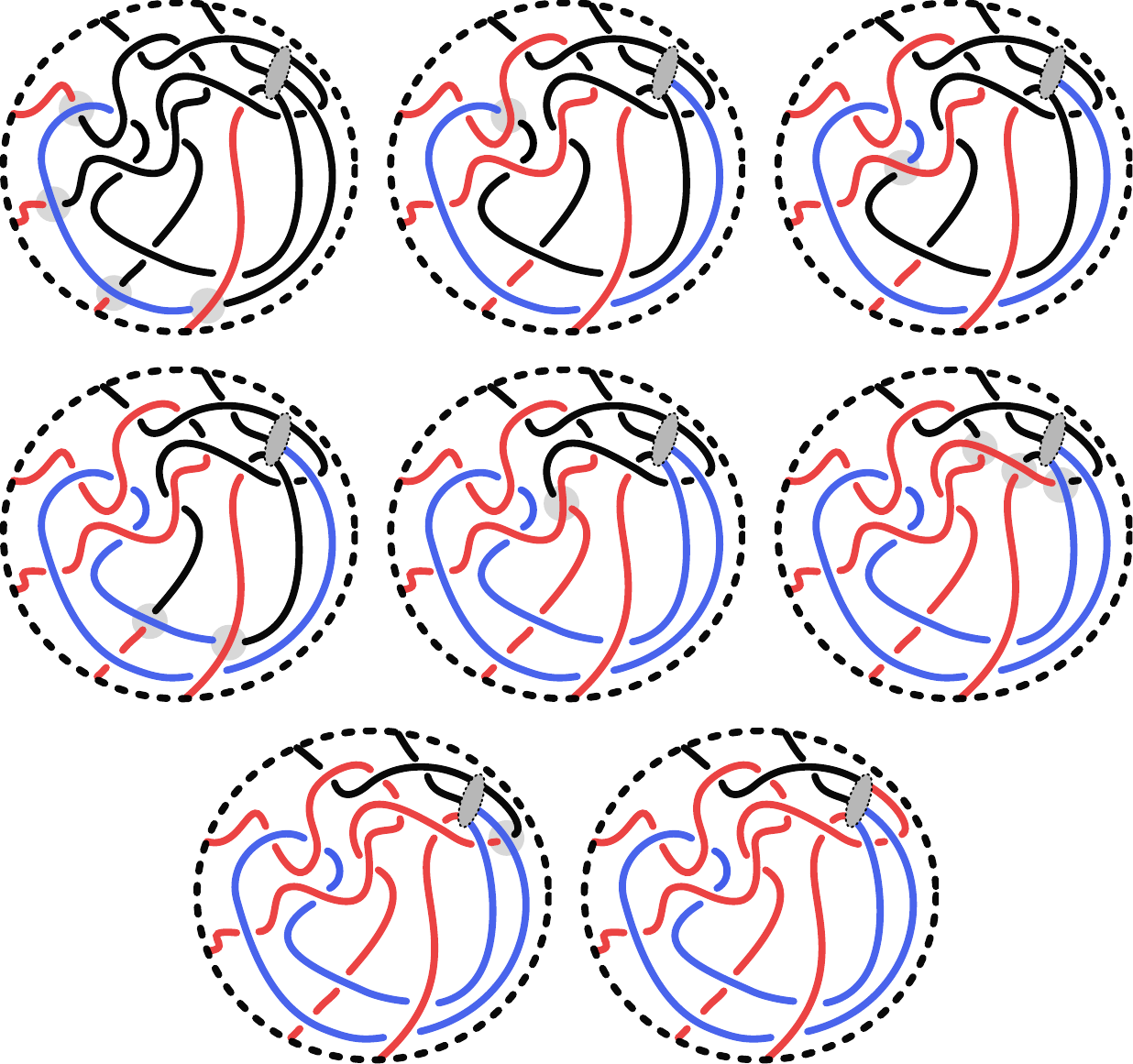}
\centering
\caption{Suppose that the diagram before the tangle replacement has Wirtinger number $k$ satisfying \autoref{lem:wirtseq}. Then, the diagram after the tangle replacement has Wirtinger number $k+1.$ Note that a Reidemeister 2 move has been performed to make the two seeds extend. We have put in an opaque gray oval to indicate the spot to iteratively insert $T'$ to drive the bridge index to be as large as needed. The sequence of diagrams should be viewed left to right, top to bottom. \label{fig:claspwirtseq}}
\end{figure}

We are now ready to prove \autoref{thm:index}. We will repeatedly discuss $3$-string tangle replacements, and so it will be convenient to name endpoints of the tangle NE, NC, NW, SE, SC, and SW as depicted in \autoref{fig:twisttangle}. Throughout, we are using \autoref{thm:main} for upper bounds and \autoref{prop:color} for lower bounds. Note that while \autoref{prop:color} is a lower bound in terms of $\widehat{\beta}$ (the unweighted version), it still works for bouquet graphs as there is no ambiguity of where to place the vertex: the vertex has to be in one of the trivial tangles. In contrast, when one has a $\theta_n$-graph, for example, the choice of whether to put the vertices on the same side will affect the weighted versus the unweighted version.

\begin{figure}[ht!]
\labellist
\small\hair 2pt
\pinlabel {$NW$}  at 460 173
\pinlabel {$NC$}  at 505 173
\pinlabel {$NE$}  at 600 173
\pinlabel {$SW$}  at 460 -13
\pinlabel {$SC$}  at 505 -13
\pinlabel {$SE$}  at 600 -13
\endlabellist
\vspace{1em}
\includegraphics[width=12cm]{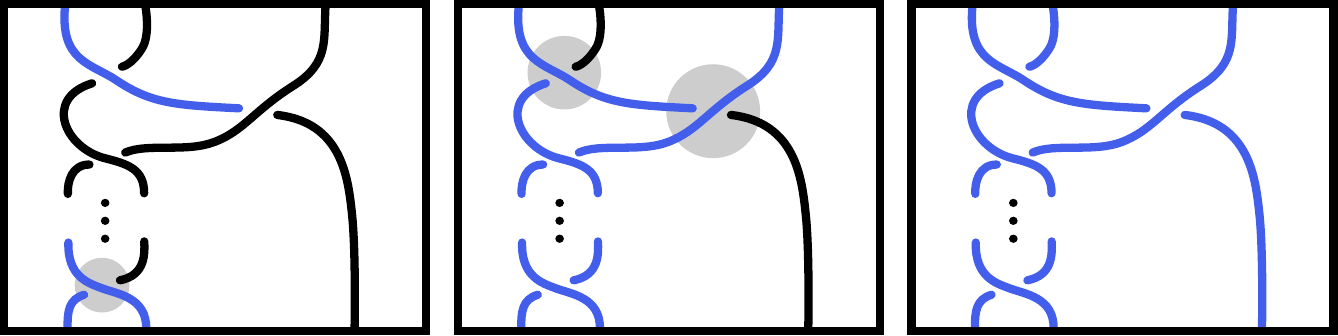}
\vspace{1em}
\centering
\caption{Extending the seed colorings through each twist knot. \label{fig:twisttangle}}
\end{figure}

\index*

\begin{proof}
Observe that the bouquet graph of $n$ petals has Euler characteristic $-(n-1)$. Thus, by taking $n$ to infinity, we obtain all possible Euler characteristics for interesting graph types that admit almost unknotted embeddings. 

Let $K_1,K_2,\ldots, K_n$ be tricolorable twist knots. In particular, each $K_i$ is a closure of the tangle in \autoref{fig:twisttangle} and each of these admit an embedding with two local maxima and two local minima. We fuse the lower left local minima to become the bouquet graph vertex shown on the left of \autoref{fig:specialbouq}. We also splice the lower right local minima in the manner shown in the figure, and call the resulting graph $B_n$.

\begin{figure}[ht!]
\includegraphics[width=11cm]{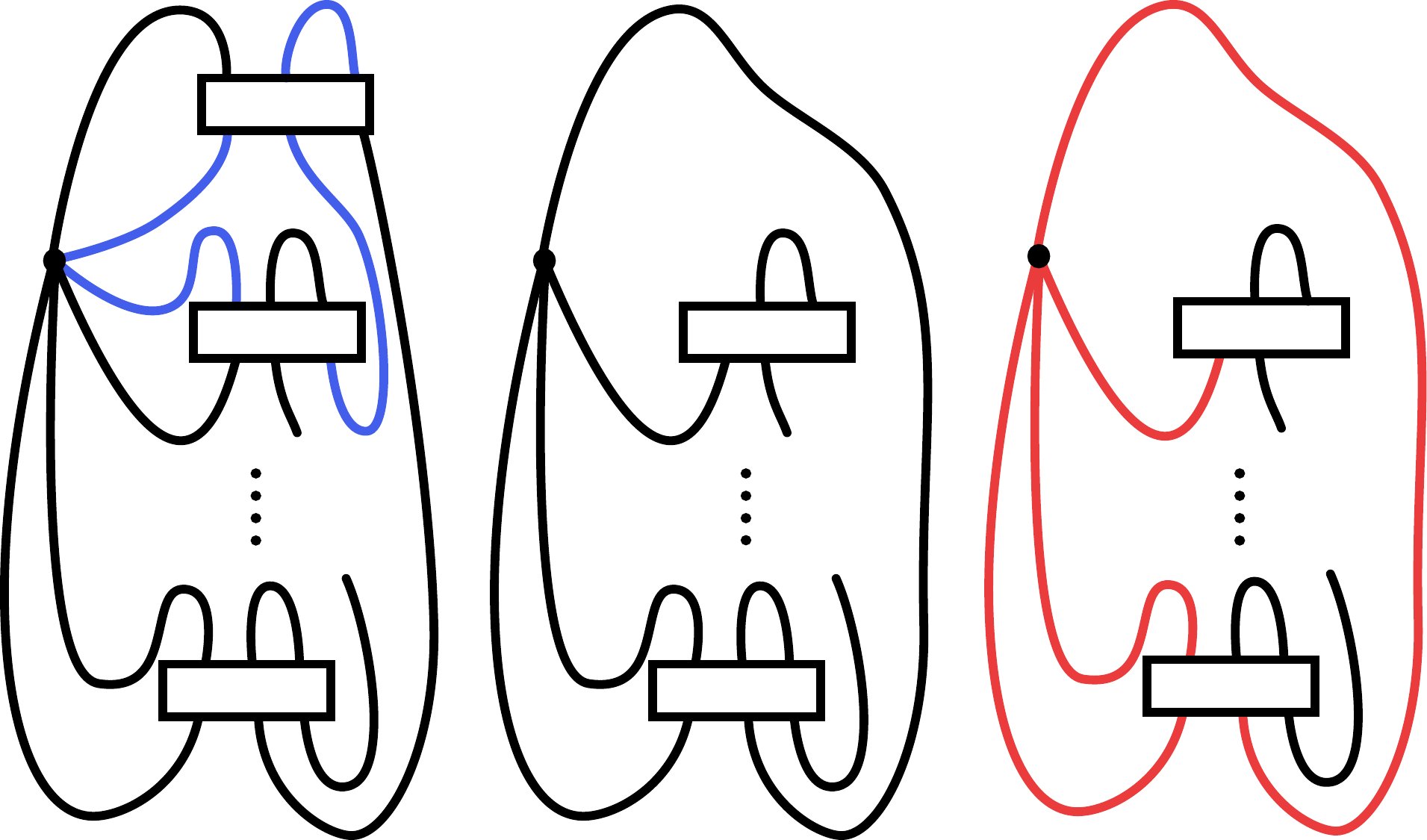}
\centering
\caption{An almost unknotted bouquet graph of $n$ petals which we will perform clasping to. The blue strand corresponds to a cycle. There are $n$ rectangular boxes each corresponding to a rational tangle defining $K_n$. \label{fig:specialbouq}}
\end{figure}

We claim that removing any edge $e$ results in a planar graph. We show this by applying the main theorem of our paper: the Wirtinger number of $B_n\backslash e$ is $n-1$. When a cycle is removed, we get a diagram as shown in the middle of \autoref{fig:specialbouq}. The red strand in the figure is the only seed $s$ of weight $2n$ we need to generate the coloring of the entire diagram. Notice that $s$ propagates to color three endpoints of each $3$-braid represented by a rectangular box. \autoref{fig:twisttangle} tells us how to extend the coloring coming from $s$ through the entire 3-braid. After that, we can repeat the process to color all 3-braid boxes in the diagram.

The graph $B_n$ has bridge index precisely $n+1.$ To see this, note that the green and blue strands in a diagram $D$ as shown in \autoref{fig:extend} function as the two seeds: one with weight $2n$ and one with weight $2$. The lower bound for this claim on $B_n$ comes from our assumption on tricolorable twist knots. As before, handling the lower bound, treat the colors at the seed to be quandle labels. This means that the quandle labels propagate consistently so that the NW and SW strands receive the same color, say green. Furthermore, the remaining SC and SW strands receive the same color, say blue.

\begin{figure}[ht!]
\includegraphics[width=7.5cm]{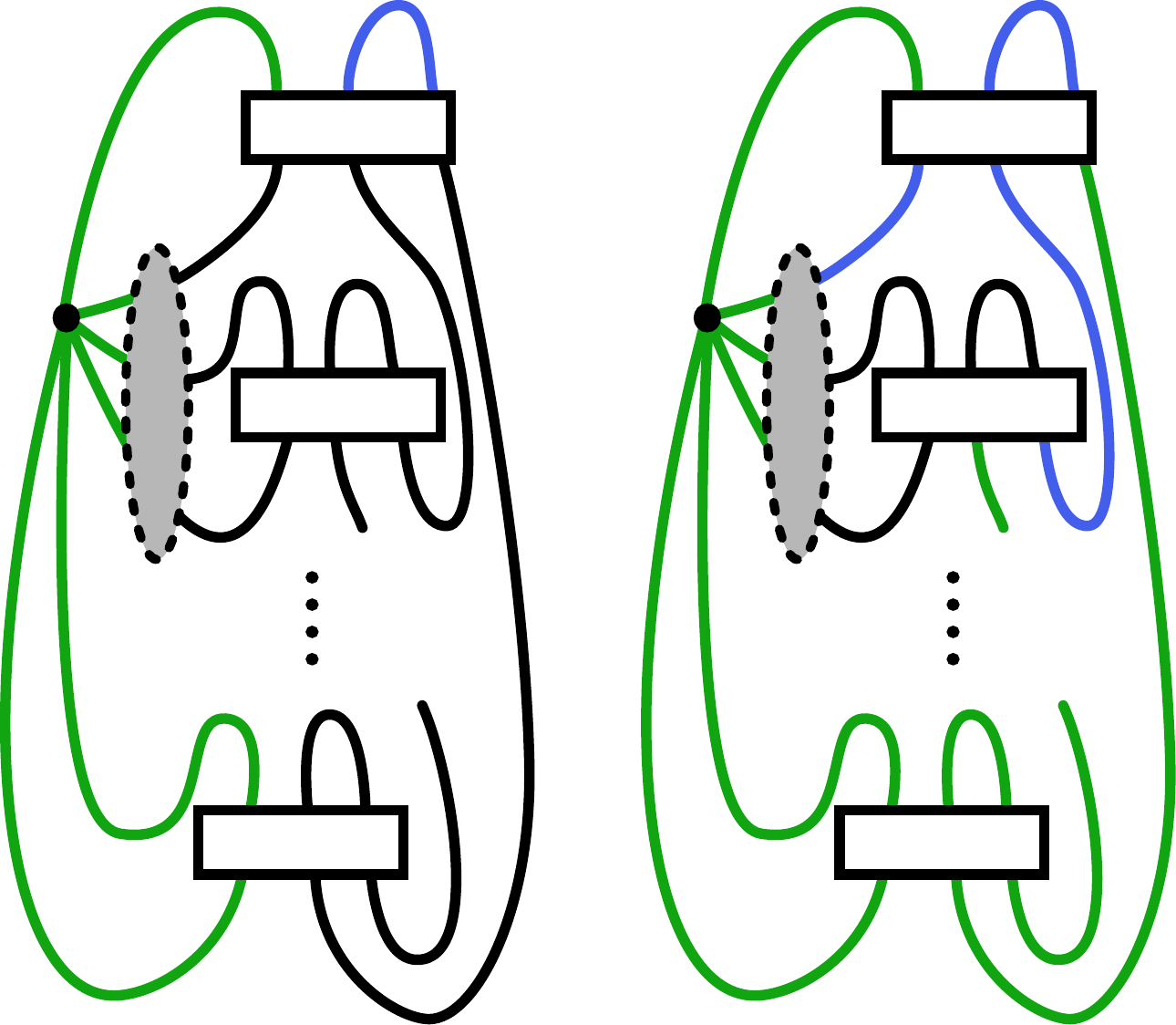}
\centering
\caption{The diagram $D\backslash T$ can be colored with 2 seeds (one with weight $2$ and one with weight $2n$) so that four out of the six strands that intersect $\partial T$ get colored. \label{fig:extend}}
\end{figure}

Now, to see that the bridge index can be arbitrarily large for a fixed $n,$ we start with $B_n$ and perform a tangle replacement $T\rightarrow T'$ as shown in \autoref{fig:extend}, and call the resulting graph $B_n^2$, represented by a diagram $D^2$. By \autoref{lem:colorincreases} and \autoref{lem:wirtseq}, $B^2_n$ has bridge index precisely $n+2$. For the upper bound, note that the original two seeds from $B_n$ suffice to extend the coloring of $D\backslash T'$ maximally to satisfy the hypothesis of \autoref{lem:wirtseq} (the NW, SW, SC, and SE strands are colored). Therefore, an additional seed of weight $2$ will suffice to generate the coloring for the entire diagram. For the lower bound, observe that this is a Type I replacement: the diagram $D$ is tricolorable, and we are performing a replacement near a vertex where all arcs are colored the same.

Finally, the process to construct further iterations $B_n^3,B_n^4,\ldots, B_n^k$ can be described as follows. Now that $B_n^2$ already contains an instance of $T'$, we look inside $T'$ to perform more tangle replacements sequentially at the opaque gray oval in \autoref{fig:claspwirtseq}. We remark again that each replacement satisfies \autoref{lem:colorincreases} and \autoref{lem:wirtseq}. For the upper bound, notice that from the first picture to the last of \autoref{fig:claspwirtseq}, we can extend the colorings at the strands marked $x$ to $B^2_n\backslash T'$ maximally until the NW, SW, SC, and SE strands are colored as \autoref{lem:wirtseq} asks for. For the lower bound, we are performing a Type II replacement, where the quandle colorings at the endpoints of $T'$ are as shown in \autoref{fig:works}.
\end{proof}

\sloppy
\printbibliography

$\quad$ \\
Sarah Blackwell \\{University of Virginia}\\
Email address: \texttt{\href{mailto:blackwell@virginia.edu}{blackwell@virginia.edu}}\\
URL: \url{https://seblackwell.com/}\\
$\quad$ \\
Puttipong Pongtanapaisan \\{Pitzer College}\\
Email address: \texttt{\href{mailto:puttip@pitzer.edu}{puttip@pitzer.edu}}\\
URL: \url{https://puttipongtanapaisan.wordpress.com/}\\
$\quad$ \\
Hanh Vo \\{Purdue University}\\
Email address: \texttt{\href{mailto:hanhmfa@gmail.com}{hanhmfa@gmail.com}}\\
URL: \url{https://hanhv.github.io/}\\

\end{document}